\def\E{{\mathcal{E}}}
\def\F{{\mathcal{F}}}
\def\M{{\mathcal{M}}}
\def\cO{{\mathcal{O}}}
\def\PP{{\mathbb{P}}}
\def\Q{{\mathbb{Q}}}
\def\Z{{\mathbb{Z}}}
\def\FB{{\mathrm{FB}}}
\def\Hilb{{\mathrm{Hilb}}}
\def\Im{{\mathrm{Im}}}
\def\Coker{{\mathrm{Coker}}}
\def\rank{{\mathrm{rank\, }}}
\def\Spec{{\mathrm{Spec\; }}}
\def\Proj{{\mathrm{Proj\; }}}
\def\lra{\longrightarrow}
\theoremstyle{plain}
\newtheorem{thm}{Theorem}[section]
\newtheorem{crl}[thm]{Corollary}
\newtheorem{prp}[thm]{Proposition}
\newtheorem{lmm}[thm]{Lemma}
\theoremstyle{definition}
\newtheorem{dfn}[thm]{Definition}
\newtheorem{exa}[thm]{Example}
\newtheorem{rem}[thm]{Remark}
\def\AA{{\mathbb{A}}}
\def\der{{\partial}}
\def\Der{{\mathrm{Der}}}
\def\FF{\mathbb{F}}
\def\I{\mathcal{I}}
\def\tors{\mathrm{tors}}
\def\Bl{\mathrm{Bl}}
\title{F-blowups of normal surface singularities}
\author{Nobuo Hara, Tadakazu Sawada and Takehiko Yasuda}
\address{Mathematical Institute, Tohoku University, Sendai 980-8578, Japan}
\email{hara@math.tohoku.ac.jp}
\thanks{The first author is supported by Grant-in-Aid for Scientific Research (22540039).}
\address{Mathematical Institute, Tohoku University, Sendai 980-8578, Japan}
\email{sa6m17@math.tohoku.ac.jp}
\address{Department of Mathematics and Computer Science,
Kagoshima University,
1-21-35 Korimoto, Kagoshima 890-0065, Japan}
\email{yasuda@sci.kagoshima-u.ac.jp}
\thanks{The third author is supported by Grants-in-Aid for Scientific Research (22740020).}
\begin{document}

\begin{abstract}
We study F-blowups of non-F-regular normal surface singularities. Especially the 
cases of rational double points and simple elliptic singularities are treated in detail.
\end{abstract}

\maketitle

\section{Introduction}

The F-blowup introduced in \cite{Y1} 
is a canonical birational modification of a variety in positive characteristic. 
For a non-negative integer $e$, the $e$-th F-blowup of a variety $X$ is 
defined as the blowup at $F^{e}_{*}\cO_{X}$, that is, the universal birational 
flattening of $F^{e}_{*}\cO_{X}$. Here $F^{e}_{*}\cO_{X}$ is the pushforward 
of the structure sheaf by the $e$-iterated Frobenius morphism. 
It turns out that the F-blowup of a quotient singularity has a connection 
with the $G$-Hilbert scheme \cite{Y1}, \cite{TY}. However, the F-blowup 
has an advantage that it is canonically defined for arbitrary singularity in 
positive characteristic, whreas the $G$-Hilbert scheme is defined only for 
a quotient singularity. Actually, it is proved in \cite{Y1} that the $e$-th 
F-blowup of any curve singularity with $e \gg 0$ is normal, whence resolves 
singularities in dimension one. \par 

As is naturally expected, the F-blowup is also connected to F-singularities 
in positive characteristic such as F-pure and F-regular singularities. It is 
proved that the sequence of F-blowups for an F-pure singularity is monotone 
\cite{Y3} and that the $e$-th F-blowup of an F-regular surface singularity 
coincides with the minimal resolution for $e\gg 0$ \cite{H}. However, it is too 
much to ask for F-blowups of normal surface singularities to be the minimal 
resolution or even smooth in general. Actually, there exist (non-F-regular) 
rational double points whose F-blowups are singular \cite{HS}. \par

Although some good aspects as well as pathologies of F-blowups have 
recently been discovered as above, their behavior is a mystery yet, even 
in dimension two. In this paper, we explore the behavior of F-blowups of 
certain normal surface singularities more in detail. We are mainly concerned 
with two classes of surface singularities, that is, non-F-regular rational 
double points (which exist only in characteristics up to five) 
and simple elliptic singularities. We will discuss F-blowups of these singularities, 
focusing on the normality, smoothness and stabilization of F-blowup sequences. 

For this purpose, we do utilize not only the classical theory of surface singularities,
but also computations with Macaulay2 \cite{M2}, which are complementary to each 
other. The key of our computations is two Macaulay2 functions which we will write 
down. Given a module, the first function computes an ideal such that the blowups 
at the ideal and module coincide, following Villamayor's description of such an ideal 
\cite{Vi}. Using this together with a built-in function  to compute  Rees algebras,
one can explicitly compute a graded ring describing the blowup at a module. The 
second function  which we will write computes the Frobenius pushforward $F_{*}M$
of a given module $M$. These functions enable us to investigate F-blowups in details, 
especially for hypersurface surface singularities in characteristic two or three. 

In the case of rational double points, 
one can apply general theory of rational surface singularities to show that 
F-blowups are normal and dominated by the minimal resolution. Then one 
can determine the $e$-th F-blowup by the direct sum decomposition of 
$F^{e}_{*}M$ into indecomposable modules. We will do this for many types 
of non-F-regular rational double points, in some cases by  theoretical 
arguments and in other cases by computer-aided calculation. In the latter 
cases, computations of the blowups at modules are again useful. For instance, 
one can see with such computation whether two obtained indecomposable 
modules are isomorphic. A particularly interesting obtained result is that for 
$e \ge 2$, the $e$-th F-blowup of $D_{4}^{1}$ and $D_{5}^1$-singularities in characteristic 
two is the minimal resolution, though $D_{4}^{1}$ and $D_{5}^1$-singularities are not F-regular. 
As far as we have computed so far, there is no other non-F-regular rational 
double point such that any of its F-blowups is the minimal resolution. 

We will investigate F-blowups of simple elliptic singularities in detail as well.
Since a simple elliptic singularity $(X,x)$ is quasi-homogeneous in general, 
its minimal resolution $\widetilde X$ has the same structure as the conormal 
bundle over  the elliptic exceptional curve $E$, which is identified with the 
negative section. We compute the torsion-free pullback of $F^e_*\cO_X$ to 
the minimal resolution using its graded structure and the vector bundle 
$F^e_*\cO_E$ over $E$. As a consequence we can determine the structure of 
the F-blowups up to normalization, which turns out to be different according 
to the self-intersection number $E^2$ and whether the singularity $(X,x)$ 
is F-pure or not. We cannot determine whether or not an F-blowup is normal  
in general, but we see that an F-blowup is non-normal in some cases with
Macaulay2 computation. It turns out that 
an F-blowup coincides with the minimal resolution in some cases, but in 
general, F-blowups of simple elliptic singularities behave badly: They are 
non-normal, not dominated by the minimal resolution and the sequence 
of F-blowups does not stabilize.

\section{Preliminaries}

\subsection{Blowups at modules}

Let $X$ be a Noetherian integral scheme and $\M$ a coherent sheaf on $X$.
For a modification $f:Y\to X$,
we denote the torsion-free pullback $(f^{*}\M)/\tors$ by $f^{\star} \M$,
where $\tors$ denotes the subsheaf of torsions.

\begin{dfn}
A modification $f:Y\to X$ is called a \emph{flattening} of $\M$
if $f^{\star} \M$ is flat, or equivalently locally free.
A flattening $f$ is said to be \emph{universal} 
if every flattening  $g : Z \to X$ of $\M$
factors as $g : Z \to Y \xrightarrow {f}X$.
(The universal flattening exists and is unique.
It can be constructed as a subscheme of a Quot scheme. See for instance \cite{OZ,Vi}.)
The universal flattening is also called the \emph{blowup of $X$ at $\M$} and denoted by $\Bl_{\M}(X)$. 
\end{dfn}

The following are basic properties of the blowup at a module, which directly follow from the definition:
\begin{enumerate}
\item The modification $\Bl_{\M}(X) \to X$ is an isomorphism exactly over the locus where $\M$ is flat. 
\item If $\mathcal{N} \subset \M$ is a torsion subsheaf, then $\Bl_{\M}(X) = \Bl_{\M/\mathcal{N}}(X)$.
\item If $\M$ is an ideal sheaf, then the blowup at $\M$ defined above coincides with the usual 
blowup with the center $\M$.
\end{enumerate}

The following are examples of blowups at modules. Therefore one can compute them
in the method explained below.

\begin{exa}
If $X$ is an algebraic variety over a field $k$, then
its Nash blowup is the blowup at $\Omega_{X/k}$, the sheaf of differentials. The higher version of the Nash blowup is also 
an  example of the blowup at  a module (see \cite{Y2}).
\end{exa}

\begin{exa}
Let $Y$ be a quasi-projective algebraic variety, $G$ a finite group of automorphisms of $Y$ and $X:=Y/G$ the quotient
variety. Then the $G$-Hilbert scheme $\Hilb^{G} (Y)$ is defined to be the closure of the set of free $G$-orbits
in the Hilbert scheme of $Y$ (see \cite{IT}).  One can show that $\Hilb^{G} (Y)$ is isomorphic to the blowup at $\pi_{*}\cO_{Y}$,
where $\pi:Y\to X$ is the quotient map.  
\end{exa}

Let $r$ be the rank of $\M$, $K$ the function field of $X$ and
fix an isomorphism $ \bigwedge^{r}\M  \otimes K \cong K $.  
Then define a fractional ideal sheaf
\[
\I_{\M} := \Im(\bigwedge^{r} \M \to \bigwedge^{r} \M  \otimes K \cong K ) . 
\]

\begin{prp}[See \cite{OZ,Vi}]\label{OZ-Vi}
The blowup at $\M$ is isomorphic to the blowup at $\I_{\M}$, 
\[
\Bl_{\I_{\M}}(X)=\Proj_{\!\! X}\!\left ( \bigoplus_{n \ge 0} \I_{\M}^{n} \right).
\]
\end{prp}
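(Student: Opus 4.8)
The plan is to show that $\Bl_{\M}(X)$ and $\Bl_{\I_{\M}}(X)$ enjoy the same universal property among modifications of $X$, so that each factors through the other and the two are canonically isomorphic; the displayed identification with $\Proj$ of the Rees algebra is then just the definition of the blowup at the fractional ideal $\I_{\M}$ (property (3) above, applied after clearing denominators). The bridge between the two universal properties is the elementary observation that forming $\I_{\M}$ commutes with torsion-free pullback: for any modification $h\colon W\to X$ one has $\I_{h^{\star}\M}=\I_{\M}\cdot\cO_{W}$ as fractional ideals, because $\bigwedge^{r}$ is right exact (so it commutes with $h^{*}$ and carries the surjection $h^{*}\M\to h^{\star}\M$ to a surjection), and because passing to the image in the function field $K$ kills exactly the torsion. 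I would record this compatibility first, as it is used in both directions.

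For the first inclusion I would start on $Y=\Bl_{\M}(X)$, where by definition $f^{\star}\M$ is locally free of rank $r$. Then $\bigwedge^{r}(f^{\star}\M)$ is an invertible, hence torsion-free, sheaf, so the natural map to $K$ identifies it with $\I_{f^{\star}\M}=\I_{\M}\cdot\cO_{Y}$. Thus $\I_{\M}\cdot\cO_{Y}$ is invertible, and by the universal property of the blowup at the ideal $\I_{\M}$ the morphism $f$ factors uniquely through $\Bl_{\I_{\M}}(X)$.

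The reverse inclusion is the crux. On $Z=\Bl_{\I_{\M}}(X)$ the fractional ideal $\I_{\M}\cdot\cO_{Z}$ is invertible by construction, so by the compatibility above $\I_{g^{\star}\M}$ is invertible; I must deduce that the torsion-free sheaf $\mathcal{G}:=g^{\star}\M$ is actually locally free of rank $r$, for then $g$ is a flattening of $\M$ and factors through the universal flattening $\Bl_{\M}(X)$. This I would prove locally by a Cramer's-rule argument. Working over a local ring $R$ with fraction field $K$, since the images in $K$ of the finitely many wedges $x_{i_{1}}\wedge\dots\wedge x_{i_{r}}$ of generators of $\mathcal{G}$ generate the invertible (hence principal) ideal $\I_{\mathcal{G}}$, Nakayama forces one of them, say $x_{1}\wedge\dots\wedge x_{r}$, to be a unit multiple of a generator. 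The $x_{1},\dots,x_{r}$ are then a $K$-basis of $\mathcal{G}\otimes K$, and for an arbitrary $m\in\mathcal{G}$ the coordinates $c_{i}$ in $m=\sum c_{i}x_{i}$ are given by the ratios $(x_{1}\wedge\dots\wedge m\wedge\dots\wedge x_{r})/(x_{1}\wedge\dots\wedge x_{r})$ computed in the one-dimensional space $\bigwedge^{r}\mathcal{G}\otimes K\cong K$; since the numerator lies in $\I_{\mathcal{G}}$ and the denominator is a unit times its generator, each $c_{i}$ is regular, so $m$ lies in the free module $\sum_{i} R x_{i}$. Hence $\mathcal{G}$ is free of rank $r$.

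Putting the two factorizations together gives the isomorphism $\Bl_{\M}(X)\cong\Bl_{\I_{\M}}(X)$. The main obstacle is the reverse inclusion, namely the local freeness of $g^{\star}\M$: one only controls the determinant \emph{ideal} $\I_{\mathcal{G}}$ (the image of $\bigwedge^{r}\mathcal{G}$ in $K$), whereas $\bigwedge^{r}\mathcal{G}$ itself may carry torsion, and it is the Cramer argument that cleanly converts invertibility of $\I_{\mathcal{G}}$ into freeness of $\mathcal{G}$. I would also take care that the trivialization $\bigwedge^{r}\M\otimes K\cong K$ chosen on $X$ is the one used throughout, so that all the fractional ideals live in the same copy of $K$ and the identifications $\I_{h^{\star}\M}=\I_{\M}\cdot\cO_{W}$ are literal equalities of fractional ideals rather than mere isomorphisms.
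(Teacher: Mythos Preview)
The paper does not supply its own proof of this proposition: it is stated with a bare citation to \cite{OZ,Vi} and then used as input. Your argument is correct and is essentially the standard one found in those references. The two-step structure---showing that on any flattening of $\M$ the determinant ideal becomes invertible, and conversely that invertibility of the determinant ideal forces the torsion-free pullback to be locally free---is exactly how Villamayor and Oneto--Zatini proceed; your Cramer's-rule computation is the usual way to handle the nontrivial direction, and your care about the compatibility $\I_{h^{\star}\M}=\I_{\M}\cdot\cO_{W}$ is appropriate.
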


Note that although $\I_{\M}$ depends on the choice of the isomorphism 
$ \bigwedge^{r}\M  \otimes K \cong K $, 
the isomorphism class of $\I_{\M}$ and so $\Bl_{\I}(X)$ are independent of it.

We will now recall Villamayor's method \cite{Vi} for computing
  $\I_{\M}$ in the affine case. Suppose that $X =\Spec R$.
Abusing the notation, we identify the sheaf $\M$ with
the corresponding $R$-module $M$, the fractional ideal sheaf $\I_{\M}$
with the fractional ideal $I_{M}\subset K$, and so forth.
Let
\[
 R ^{m} \xrightarrow{A} R^{n} \to M  \to 0
\]
be a presentation of $M$ given by an $n \times m$ matrix $A$.
Here and hereafter we think of elements of  free modules as  column vectors
and the map $R ^{m} \xrightarrow{A} R^{n}$
is given by the left multiplication with $A$, $v \mapsto A v$. We call $A$ a \emph{presentation matrix} of $M$.
Then there exist  $n-r$ columns of $A$ such that if $A'$ denotes the 
submatrix of $A$ formed by these columns, then
\[
M' := \Coker(R^{n-r} \xrightarrow{A'} R^{n})
\]
has rank $r$.  Then $M$ is a quotient of $M'$ by  some  torsion submodule of $M'$. Therefore the blowups at $M$ and $M'$ are equal.

\begin{prp}[\cite{Vi}]\label{prp-Vi}
The ideal generated by $(n-r)$-minors of $A'$, which is by definition the $r$-th Fitting ideal of $M'$,
is equal to $I_{M}$ for a suitable choice of isomorphism 
$\bigwedge^{r} M  \otimes K \cong K$.
\end{prp}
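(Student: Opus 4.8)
The plan is to reduce the computation of $I_M$ to that of $I_{M'}$, and then to identify $I_{M'}$ with the $r$-th Fitting ideal of $M'$ by making the trivializing isomorphism $\bigwedge^r M \otimes K \cong K$ completely explicit through a generic exact sequence.

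First I would dispose of the reduction to $M'$. Since $M$ is the quotient of $M'$ by a torsion submodule and both have rank $r$, the induced surjection $\bigwedge^r M' \to \bigwedge^r M$ becomes an isomorphism after tensoring with $K$; because this surjection is compatible with the two maps into $K$, their images coincide, so $I_M = I_{M'}$ for compatible choices of the trivializing isomorphisms. It therefore suffices to compute $I_{M'}$ from the presentation $R^{n-r} \xrightarrow{A'} R^n \to M' \to 0$.

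Next I would exploit the rank hypothesis. The condition $\rank M' = r$ forces $A'$ to have rank $n-r$ over $K$; as $A'$ has exactly $n-r$ columns, this says $A' \otimes K$ is injective, so tensoring the presentation with $K$ yields a short exact sequence of $K$-vector spaces $0 \to K^{n-r} \xrightarrow{A'} K^n \to M' \otimes K \to 0$. The top-exterior-power isomorphism attached to this sequence gives $\bigwedge^n K^n \cong \bigwedge^{n-r} K^{n-r} \otimes \bigwedge^r(M'\otimes K)$, and trivializing the first two factors by the standard bases $e_1 \wedge \dots \wedge e_n$ and $u_1 \wedge \dots \wedge u_{n-r}$ produces the specific isomorphism $\bigwedge^r(M'\otimes K) \cong \bigwedge^r M \otimes K \cong K$ that I will use.

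Finally I would compute the image of the natural generators. Denoting by $v_1, \dots, v_n$ the images of the standard basis in $M'$, the module $\bigwedge^r M'$ is generated by the elements $v_{i_1} \wedge \dots \wedge v_{i_r}$. Lifting each $v_{i_k}$ back to $e_{i_k}$ and applying the isomorphism above amounts to wedging the $n-r$ columns of $A'$ with $e_{i_1} \wedge \dots \wedge e_{i_r}$ inside $\bigwedge^n K^n \cong K$; upon expansion the only surviving term is, up to sign, the $(n-r)$-minor of $A'$ on the complementary rows $\{1,\dots,n\}\setminus\{i_1,\dots,i_r\}$. Hence the image of $\bigwedge^r M'$ in $K$ is generated by all $(n-r)$-minors of $A'$, which is precisely $\mathrm{Fitt}_r(M')$, giving the desired equality. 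The only delicate point is the bookkeeping in this last step — tracking signs and checking that the chosen trivializations yield an honest isomorphism rather than merely a nonzero map — but this is the standard determinant-of-a-complex computation and presents no essential obstacle.
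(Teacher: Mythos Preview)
Your argument is correct. The paper itself does not supply a proof of this proposition; it simply quotes the result from Villamayor \cite{Vi}, so there is nothing to compare against beyond noting that your reduction $I_{M}=I_{M'}$ followed by the explicit determinant-of-a-short-exact-sequence computation is precisely the standard route to this statement.
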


The computation of this ideal is implemented  in Macaulay2 as
\begin{verbatim}
villamayorIdeal = M -> (
  r := rank M;
  P := presentation M;
  s := rank source P;
  t := rank target P;
  I := {}; for j to s-1 when #I < t-r do (J := append(I,j); 
    if rank coker P_J  == t- #J then I = J );
  fittingIdeal(r,coker P_I) 
  );
\end{verbatim}
Once the ideal  $I_{M}$ was computed, then the blowup at $M$ is computed as the projective spectrum of the Rees algebra of the ideal:
\[
\Bl_{M}(X) = \Proj R[I_{M}t], \qquad   R[I_{M}t]:=\bigoplus _{i \ge 0} I_{M}^{i} t^{i} \subset R[t].
\]
The computation of  Rees algebras has been already implemented  in Macaulay2 as \texttt{reesAlgebra}.

The computation of blowups at modules is useful to study modules themselves. 
For instance, one can see that two given modules are not isomorphic 
if the associated blowups are not isomorphic.

%
%
%

\subsection{F-blowups}

Suppose now that $X$ is a Noetherian integral scheme of characteristic 
$p>0$ and that its (absolute) Frobenius morphism $F:X \to X$ is finite. 

\begin{dfn}[\cite{Y1}]
For a non-negative integer $e$, we define the \emph{$e$-th F-blowup} of $X$ to be 
the blowup of $X$ at $F^{e}_{*} \cO_{X}$ and denote it by $\FB_{e}(X)$.
\end{dfn}

\textrm{F}rom \cite{Ku}, if $e>0$, then the flat locus of 
$F^{e}_{*} \cO_{X}$ coincides with the regular locus of $X$.
Therefore the $e$-th F-blowup is an isomorphism exactly over the regular locus.

If $X$ is an algebraic variety over an algebraically closed field $k$, then 
there is a more moduli-theoretic construction of F-blowups, which was actually the original definition
of F-blowups in \cite{Y1}:
The $e$-th F-blowup is isomorphic (over $\Z$) to the closure of the set 
\[
 \{ [(F^{e})^{-1} (x)] \mid  \text{nonsingular point }x \in X(k) \}
\]
in the Hilbert scheme of zero-dimensional subschemes.
Here $(F^{e})^{-1} (x)$ is the scheme-theoretic inverse image and
a closed subscheme of $X$ with length $p^{e \dim X}$, and
$[(F^{e})^{-1} (x)]$ is the corresponding point in the Hilbert scheme.

\subsection{Computing the Frobenius pushforward}

Let us now suppose that $X$ is affine, say $X=\Spec R$.
In order to compute  F-blowups of $X$ along the line explained above,
we need to first compute a presentation of $F^{e}_{*} R$. 
For  later use, 
we will explain more generally how to compute $F^{e}_{*}M$ for 
any finitely generated $R$-module $M$ in the case where $R$ is finitely generated over
the prime field $\FF_{p}$.

\subsubsection{The case of a polynomial ring}
Set $S = \FF_{p}[x_{1}, \dots, x_{n}]$ and $q=p^{e}$.
A monomial $x ^{a} = x_{1}^{a_{1}} \cdots x_{n}^{a_{n}}$ defines an $S$-linear map
\[
 \mu _{x^{a}} : S \to S, \ f \mapsto x^{a}f .
\]
Then we reinterpret this map according to another $S$-module structure on 
$S$ by $g \cdot f := g ^{q}f$.
We denote this new $S$-module by $S'$, which is a free $S$-module of rank $q^{n}$ and nothing but $F^{e}_{*}S$. 
We also denote the  map $\mu _{x^{a}}$  regarded as an endomorphism of $S'$ by $\mu_{x^{a}}'$,
which is nothing but $F^{e}_{*} \mu_{x^{a}}$. 

Let $\Lambda:= \{0,1,\dots,q-1\}^{n}$. Then
 $q^{n}$ monomials $x^{b}$, $b \in \Lambda$ form a standard basis of $S'$.
For such a  monomial $x^{b}$,  we have 
\[
\mu_{x^{a}}(x^{b}) = x^{a+b}= x^{q((a+b)\div q)} x^{(a+b)\% q}.
\]
Here $ \div q$ and $\% q$ respectively denote the quotient and the remainder
by the component-wise division by $q$.
We rewrite it as
\[
 \mu_{x^{a}}'(x^{b}) =x^{(a+b)\div q} \cdot x^{(a+b)\% q}.
\]
Thus we obtain
\begin{lmm} 
The defining matrix,  $U(a,e) = (u_{ij})_{i,j\in \Lambda}$, of $\mu'_{x^{a}}$
with respect to the standard basis
is given by 
\[
u_{ij} = 
\begin{cases}
  x^{(a+j) \div q}   & (i = (a+j) \% q) \\
 0  & (\text{otherwise}) .
\end{cases} 
\]
\end{lmm}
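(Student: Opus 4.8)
The plan is to read the matrix directly off the component-wise Euclidean division already set up above, so the argument amounts to bookkeeping of indices. First I would confirm that $\mu'_{x^{a}}$ is genuinely $S$-linear for the twisted module structure on $S'$: for $g,f \in S$ one has $x^{a}(g\cdot f) = x^{a} g^{q} f = g^{q}(x^{a} f) = g\cdot(x^{a} f)$, so $\mu_{x^{a}}$ commutes with the action $g\cdot f = g^{q} f$. Hence $\mu'_{x^{a}}$ is determined by its values on the standard basis $\{x^{b} : b \in \Lambda\}$, and to exhibit $U(a,e)$ it suffices to expand each $\mu'_{x^{a}}(x^{j})$, $j \in \Lambda$, in this basis.

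Next, for a fixed basis vector $x^{j}$ I would compute $\mu_{x^{a}}(x^{j}) = x^{a+j}$ and apply division with remainder component-wise by $q$, writing $a+j = q\,((a+j)\div q) + (a+j)\% q$. Factoring out the $q$-th power gives $x^{a+j} = \bigl(x^{(a+j)\div q}\bigr)^{q}\, x^{(a+j)\% q}$, and interpreting this in $S'$, where multiplying by a $q$-th power is exactly the twisted action, yields $\mu'_{x^{a}}(x^{j}) = x^{(a+j)\div q} \cdot x^{(a+j)\% q}$, which is precisely the displayed formula preceding the lemma.

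Finally, since the remainder $(a+j)\% q$ has all entries in $\{0,\dots,q-1\}$, it again lies in $\Lambda$, so $x^{(a+j)\% q}$ is itself a standard basis vector. Thus the column of $U(a,e)$ indexed by $j$ has a single nonzero entry, namely the coefficient $x^{(a+j)\div q}$ in the row indexed by $i = (a+j)\% q$, giving exactly the claimed formula for $u_{ij}$. There is no real obstacle here: the only point demanding care is the convention for the component-wise operations $\div q$ and $\% q$, together with the check that the remainder always lands back in $\Lambda$, which is what guarantees that each column of $U(a,e)$ has precisely one nonzero entry.
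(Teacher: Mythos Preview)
Your proof is correct and follows exactly the paper's approach: the lemma is stated immediately after the displayed computation $\mu'_{x^{a}}(x^{b}) = x^{(a+b)\div q} \cdot x^{(a+b)\% q}$, and the matrix entries are read off directly from this formula just as you do. Your additional verification of $S$-linearity and the observation that $(a+j)\% q \in \Lambda$ are implicit in the paper but make the argument more self-contained.
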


Then for a polynomial $f =\sum_{a} c_{a}x^{a}\in S$, 
if $\mu_{f} :S \to S$ denotes 
the multiplication with $f$, then 
$\mu_{f}'=F^{e}_{*}\mu_{f}$ is defined by the matrix 
\[
U(f,e) := \sum_{a} c_{a} \cdot U(a,e).
\]
Note that since the coefficient field is $\FF_{p}$ and the Frobenius map of $\FF_{p}$
is the identity map, we do not have to change the coefficients $c_{a}$. 

Let 
\[
A = 
\left(
\begin{array}{ccc}a_{11} & \cdots & a_{1m} \\ 
\vdots & \ddots & \vdots \\
a_{l1} & \cdots & a_{lm}\end{array}
\right),
\]
be a $l \times m$ matrix with entries in $S$, which defines
an $S$-linear map  $S^{m} \to S^{l}$ denoted again by $A$.  
Then the $F_{*}^{e}A: (S')^{\oplus m} \to (S')^{\oplus l}$ is given by $q^{n}l \times q^{n}m$ matrix
\[
U(A,e) = 
\left(
\begin{array}{ccc}U(a_{11},e) & \cdots & U(a_{1m},e) \\ 
\vdots & \ddots & \vdots \\
U(a_{l1},e) & \cdots & U(a_{lm},e)\end{array}
\right).
\]
As a consequence, we have
\begin{prp}
If $A$ is a presentation matrix of an $S$-module $M$,
then $U(A,e)$ is a presentation matrix of $F^{e}_{*}M$. 
\end{prp}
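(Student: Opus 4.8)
The plan is to deduce the statement at once from the exactness of the Frobenius pushforward. Indeed, $F^{e}_{*}$ is nothing but restriction of scalars along the ring homomorphism $F^{e}:S\to S$, $s\mapsto s^{q}$: the $S$-module $F^{e}_{*}M$ has the same underlying abelian group as $M$, equipped with the twisted action $s\cdot m=s^{q}m$. Since restriction of scalars leaves the underlying abelian group unchanged, kernels and images are computed exactly as before, so $F^{e}_{*}$ is an exact functor. (Geometrically this is just the exactness of pushforward along the finite, hence affine, Frobenius morphism.) Moreover $F^{e}_{*}$ commutes with finite direct sums, since these too are formed on the level of the underlying abelian group.

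First I would apply $F^{e}_{*}$ to the presentation
\[
S^{m} \xrightarrow{A} S^{l} \to M \to 0
\]
of $M$. Exactness yields the exact sequence
\[
F^{e}_{*}(S^{m}) \xrightarrow{F^{e}_{*}A} F^{e}_{*}(S^{l}) \to F^{e}_{*}M \to 0 ,
\]
and compatibility with finite direct sums gives $F^{e}_{*}(S^{k})=(F^{e}_{*}S)^{\oplus k}=(S')^{\oplus k}$ for every $k$, so that the sequence becomes
\[
(S')^{\oplus m} \xrightarrow{F^{e}_{*}A} (S')^{\oplus l} \to F^{e}_{*}M \to 0 .
\]
This already exhibits $F^{e}_{*}M$ as the cokernel of $F^{e}_{*}A$ between standard free $S'$-modules.

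It then remains only to recognize $F^{e}_{*}A$ in terms of the standard monomial basis of $S'$, and this identification has been carried out in the paragraphs above: expanding each entry $a_{ij}$ of $A$ into monomials and invoking the Lemma, the endomorphism $\mu'_{x^{a}}=F^{e}_{*}\mu_{x^{a}}$ is represented by $U(a,e)$, whence $F^{e}_{*}A$ is represented precisely by the block matrix $U(A,e)$. Substituting this into the exact sequence shows that $U(A,e)$ is a presentation matrix of $F^{e}_{*}M$. The only genuine content here is the exactness of $F^{e}_{*}$; the remaining steps are the bookkeeping already assembled above, so I anticipate no serious obstacle.
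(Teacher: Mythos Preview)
Your proof is correct and follows essentially the same approach as the paper: the paper introduces the proposition with the phrase ``As a consequence, we have,'' treating it as an immediate corollary of the preceding computation of $F^{e}_{*}A$ as the block matrix $U(A,e)$, and you simply make explicit the one point the paper leaves implicit, namely that $F^{e}_{*}$ (being restriction of scalars) is exact and hence preserves the cokernel presentation.
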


\subsubsection{The general case}

Suppose that $R$ is the quotient ring $S /I$, $I=(f_{1},\dots,f_{l})$, 
and $M$ is a finitely generated $R$-module. 
Then we first have to compute a presentation of $M$ as an $S$-module.
Let $A$ be a matrix with entries in $S$ and let $\bar A$ be the matrix with entries in $R$ induced from $A$.
Suppose that $\bar A$ is a presentation matrix of $M$:
\[
R^{m} \xrightarrow{\bar A} R^{n}\to M \to 0 .
\]
Let $\tilde M$ be the $S$-module with the presentation matrix $A$: 
\[
S^{m} \xrightarrow{A} S^{n}\to \tilde M \to 0 .
\]
Then $M = R \otimes _{S} \tilde M$.
The  $S$-module $R$ has a standard presentation
\[
S^{l} \xrightarrow{(f_{1} , \dots, f_{l})} S \to R  \to 0.
\]
Now a presentation of $M$ as an $S$-module can be computed from those of $R$ and $\tilde M$. 

If $B$ is a presentation matrix of $M$ as an $S$-module, 
then $U(B,e)$ is one of $F^{e}_{*}M$ as an $S$-module. If $\overline{U(B,e)} $ denotes the matrix with entries in $R$
induced from $U(B,e)$, then $\overline{U(B,e)} $ is a presentation matrix of $F^{e}_{*}M$ as an $R$-module.

\subsubsection{Implementation to Macaulay2}

The following Macaulay2 function returns the pushforward $F^{e}_{*}M$ of the given module $M$,
following the recipe explained above:
\begin{verbatim}
frobeniusPushForward = (M,e) -> ( 
  R := ring M;
  p := char R;
  assert(p>0);
  q := p^e; 
  I := ideal R;
  l := numgens I;
  B := gens ideal R;
  S := ambient R;                            
  n := numgens S;  
  qSequence := i -> apply(0..n-1, j -> (i % q^(n-j)) // q^(n-j-1));
  toNumber := i -> sum(n , j ->  i_j * q ^ (n-j-1)  );
  qQuotient := i -> apply(i, j -> j // q);
  qRemainder := i -> apply(i, j -> j % q);
  monoToMatrix := m ->  
    (coefficients m)_1_(0,0) 
      * map(S^(q^n),S^(q^n),   
          (i,j) -> (e =( toList qSequence i ) + (exponents m)_0;  
                     if (toNumber qRemainder e) == j 
    	               then S_(toList qQuotient e) 
                       else 0));
  polyToMatrix := f -> 
    if f == 0_S 
      then map(S^(q^n),S^(q^n),0_S) 
      else sum(terms f, i -> monoToMatrix i);
  basisToMatrix := b -> 
    fold((i,j)->(i|j), apply(( flatten entries b), polyToMatrix ) );
  matrixToMatrix := m -> 
    fold((i,j)->(i||j), 
      apply(apply(entries m, i -> matrix{i}), basisToMatrix));      
  ROverS := coker map(S^1,S^l, entries B); 
  PresenOverR := presentation minimalPresentation M;
  PresenOverS := presentation minimalPresentation (
                                coker(sub(PresenOverR,S))** ROverS);   
  L := matrixToMatrix PresenOverS;
  minimalPresentation coker sub(L,R)
  );     
\end{verbatim}
Note that in the computation with Macaulay2, columns and rows of matrices should be
indexed by single indices rather than multi indices. 
For this aim, the above inner functions \texttt{qSequence} and \texttt{toNumber} define bijections
between the sets $\{0, 1,\dots,q^{n}-1\}$ and $\Lambda$ which are inverses to each other.

We should also note that one can compute $F_{*}^{e}M$ 
also with the built-in function \texttt{PushForward}
in the case where the ring and the module are (weighted) homogeneous.

\subsection{Computing the singular and non-normal loci of a blowup}

We often would like to know if a given blowup is smooth or normal,
or to know where the singular locus or the non-normal locus is. 
One way to compute the singular locus of $\Bl_{I}(X)$ is 
to compute the singular locus of $\Spec R[It]$.
For instance, suppose that we have an expression of  $R[It]$ as
a quotient of a polynomial ring over $R$, 
\[
R[It]=R[t_{1},\dots,t_{n}] / J .
\]
Then
$\Bl_{I}(X)$ is smooth if and only if the singular locus of $\Spec R[It]$ is contained
in the closed subset $V(t_{1},\dots,t_{n})\subset \Spec R[It]$. 
This method is useful when the Rees algebra is relatively simple.
Otherwise, the computation  may not finish in a reasonable time.

In that case, 
an alternative way is to compute the singular loci of affine charts.
With the above notation, the blowup $\Bl_{I}(X)$ is covered by $n$ affine charts corresponding
to the variables $t_{1} , \dots, t_{n}$. Their coordinate rings are
\[
R[t_{1},\dots,t_{n}] / (J  + (t_{i}-1) ) , \ i=1,\dots,n.
\]
These rings are likely to become simpler than  $R[It]$
and easier to compute the singular loci.
Computation of these rings is implemented as follows: 
\begin{verbatim}
affineCharts = S ->( 
      T := (flattenRing S)_0;
      varsOfS := apply(flatten entries vars S,i->sub(i,T));
      apply(varsOfS, i-> minimalPresentation(T/ideal(i - 1 )))) ;
\end{verbatim}

The same method can apply to find the non-normal locus.

\subsection{Embedding F-blowups into the Grassmannian and the projective space}

As already mentioned above, F-blowups are constructed as a subscheme 
of the Grassmannian. Then further composing with the Pl\"ucker embedding, we obtain an embedding into a projective space over $X$. 

To describe this embedding, let $X=\Spec R$ be of dimension $n$, 
$K$ the function field of $X$, and let the fractional ideal 
$I=\Im(\bigwedge^{p^n} R^{1/p^e} \to K)$ be generated by $m+1$ 
elements $s_0,\dots,s_m$. Then, being the blowup of $X$ at $I$, 
the $e$-th F-blowup $\FB_e(X)$ of $X$ is embedded into the projective 
space $\PP^m_X$ over $X$. 

Suppose now that $f \colon Y \to X$ is any flattening of 
$R^{1/p^e} \cong F^e_*\cO_X$. Then we have a surjection 
$\cO_Y^{\oplus m+1} \to f^{\star}I=\det f^{\star}R^{1/p^e}$
induced by $s_0,\dots,s_m$, which gives rise to a morphism 
$\Phi_e \colon Y \to \PP^m_X$ 
such that $\Phi_e^*\cO_{\PP}(1) \cong \det f^{\star}R^{1/p^e}$, 
and the image $\Phi_e(Y)$ of this morphism is nothing but $\FB_e(X)=\Bl_I(X)$. 

In dimension two where the existence of resolution of singularities 
is established in arbitrary characteristic, we can study F-blowups 
downwards from a resolution that flattens the $\cO_X$-module 
$F^e_*\cO_X \cong \cO_X^{1/p^e}$. The following is an immediate consequence of the above observation. 

\begin{prp}\label{FBsurface}
Let $X$ be a surface over $k$ and let $f \colon \widetilde{X} \to X$ 
be a resolution with irreducible exceptional curves $E_1,\dots,E_s$. 
Suppose that $f^{\star}\cO_X^{1/p^e}$ is flat, so that we have a 
birational morphism $\Phi_e \colon \widetilde{X} \to \FB_e(X)$. 
Then 
$\Phi_e(E_i)$ is a curve on $\FB_e(X)$ if $c_1(f^{\star}\cO_X^{1/p^e})E_i>0$, 
and 
$E_i$ contracts to a point on $\FB_e(X)$ if $c_1(f^{\star}\cO_X^{1/p^e})E_i=0$.
\end{prp}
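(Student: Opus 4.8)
The plan is to reduce the statement to the standard fact that a morphism to projective space contracts a complete curve to a point exactly when the pulled-back hyperplane bundle has degree zero on that curve. The relevant line bundle has already been singled out in the preceding discussion: setting $L := \det f^{\star}\cO_X^{1/p^e}$, the observation above gives $L \cong \Phi_e^*\cO_{\PP^m_X}(1)$, where $\Phi_e \colon \widetilde{X} \to \PP^m_X$ is the morphism attached to the $m+1$ sections $s_0,\dots,s_m$ generating $L$, and whose image is exactly $\FB_e(X) = \Bl_I(X)$. First I would record that, since $E_i$ is a complete irreducible curve, its image $\Phi_e(E_i)$ is either a point or a complete curve; these are the only two possibilities, so it suffices to match each geometric alternative with the corresponding value of $c_1(L)E_i$.

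Next I would translate the intersection number into the degree of a restricted line bundle, writing $c_1(L)E_i = \deg_{E_i}\!\left(L|_{E_i}\right) = \deg_{E_i}\!\left((\Phi_e|_{E_i})^*\cO_{\PP^m_X}(1)\right)$. Because $\Phi_e$ is a morphism over $X$ and each exceptional curve $E_i$ is contracted by $f$, hence lies in a single fibre of $\PP^m_X \to X$, the restriction of $\cO_{\PP^m_X}(1)$ to that fibre is the ordinary $\cO_{\PP^m}(1)$. Thus the entire question takes place inside one projective space $\PP^m$ over the image point, and the familiar absolute statements about $\cO_{\PP^m}(1)$ apply.

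Then I would invoke the dichotomy. If $\Phi_e$ contracts $E_i$ to a point, then $\Phi_e|_{E_i}$ is constant, its pullback of $\cO(1)$ is trivial, and $c_1(L)E_i = 0$. Conversely, if $\Phi_e(E_i)$ is a curve, then $\Phi_e|_{E_i} \colon E_i \to \Phi_e(E_i)$ is a finite surjection onto a positive-dimensional image, and $\cO(1)$ restricts to a line bundle of strictly positive degree on any curve in $\PP^m$; hence $\deg_{E_i}(L|_{E_i})$ equals that positive degree times the degree of the finite map, so $c_1(L)E_i > 0$. These two implications, together with the point-or-curve dichotomy of the image, yield the asserted equivalences: if $c_1(L)E_i > 0$ the image cannot be a point, so it is a curve, and if $c_1(L)E_i = 0$ the image cannot be a curve, so $E_i$ contracts to a point. (As a side remark, $L$ is globally generated, hence nef, which is consistent with $c_1(L)E_i \ge 0$ always holding.)

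I expect no serious obstacle, which is precisely why the statement is labelled an immediate consequence of the observation. The only point demanding genuine care is the bookkeeping for the relative projective space $\PP^m_X$: one must check that the contraction behaviour of $E_i$ and the degree computation may be carried out fibrewise, so that the standard absolute argument about $\cO_{\PP^m}(1)$ transfers verbatim. Everything else is the routine correspondence between morphisms to $\PP^m$ and globally generated line bundles.
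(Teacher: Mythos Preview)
Your argument is correct and is precisely the intended one: the paper does not give a proof at all but simply states that the proposition is ``an immediate consequence of the above observation,'' namely that $\Phi_e^*\cO_{\PP^m_X}(1)\cong\det f^{\star}\cO_X^{1/p^e}$. Your write-up spells out exactly this immediate consequence --- the standard dichotomy that a morphism to $\PP^m$ contracts a complete curve if and only if the pulled-back $\cO(1)$ has degree zero on it --- together with the (correct) remark that the exceptional curves lie over a point of $X$ and hence in a single fibre of $\PP^m_X\to X$, so the relative setting reduces to the absolute one.
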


\section{F-blowups of rational surface singularities}

Throughout this section we work under the following notation: 

\medskip
$k$: an algebraically closed field of characteristic $p>0$ \par
$(X,x)$: a rational surface singularity defined over $k$ 
         with local ring $R=\cO_{X,x}$                      \par
$f\colon \widetilde{X} \to X$: the minimal resolution of $(X,x)$ 
                               with Exc$(f)=\bigcup_{i=1}^s E_i$ 

\medskip
The situation is quite simple in this case, because of the following 
fact \cite{AV}: If $M$ is a reflexive $\cO_X$-module\footnote{
We always assume that $M$ is a finitely generated $\cO_X$-module.}, 
then its torsion-free pullback $\widetilde{M}=f^{\star}M=f^*M$/torsion 
is an $f$-generated locally free $\cO_{\widetilde X}$-module such 
that $f_*\widetilde{M}=M$ and $R^1f_*\widetilde{M}=0$. Note that 
this vanishing of higher direct image is an easy consequence of 
the rationality of the singularity $(X,x)$ and the $f$-generation 
of $\widetilde M$, which gives rise to a surjection 
$\cO_{\widetilde X}^{\oplus n} \to\hspace{-10pt}\to \widetilde{M}$. 

We also have 

\begin{lmm}[{\cite[Lemma 1.8]{H}}]\label{complete}
If $M$ is a reflexive $\cO_X$-module of rank $r$, then the natural map 
$$\bigwedge^r M \to f_*(\det\widetilde{M})$$
is surjective. 
\end{lmm}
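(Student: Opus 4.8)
The plan is to realise the natural map as the pushforward of a surjection of sheaves on $\widetilde X$ and to control the intervening $R^1f_*$ by rationality. First I would unwind the map: since $M=f_*\widetilde M$, the natural arrow is the composite $\bigwedge^r M=\bigwedge^r f_*\widetilde M\to f_*\bigwedge^r\widetilde M=f_*(\det\widetilde M)$, i.e.\ the effect on global sections of applying $\bigwedge^r$ to the $f$-generation surjection $f^*M\to\widetilde M$. Concretely, choose generators $m_1,\dots,m_n$ of $M$ over $R$; because $\widetilde M$ is $f$-generated, the induced $u\colon\cO_{\widetilde X}^{\oplus n}\to\widetilde M$ is surjective, and $\bigwedge^r u\colon\bigwedge^r\cO_{\widetilde X}^{\oplus n}\to\det\widetilde M$ is a surjection from a trivial bundle whose generating sections are exactly the images of the standard wedges $m_{i_1}\wedge\cdots\wedge m_{i_r}\in\bigwedge^r M$. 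Thus surjectivity of the lemma's map is equivalent to surjectivity of $f_*$ applied to $\bigwedge^r u$.

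The key step is the vanishing $R^1f_*E=0$ for the \emph{syzygy bundle} $E:=\Ker u$, which sits in $0\to E\to\cO_{\widetilde X}^{\oplus n}\xrightarrow{u}\widetilde M\to0$. Pushing this forward and using the rationality hypothesis $R^1f_*\cO_{\widetilde X}=0$ together with $f_*\widetilde M=M$, the long exact sequence reduces to $\cO_X^{\oplus n}\xrightarrow{f_*u}M\to R^1f_*E\to0$; here $f_*u$ is the map $e_i\mapsto m_i$, surjective by the choice of generators, so $R^1f_*E=0$.

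It then remains to transport this vanishing through the exterior power. From $u$ one has the right-exact wedge sequence $E\otimes\bigwedge^{r-1}\cO_{\widetilde X}^{\oplus n}\to\bigwedge^r\cO_{\widetilde X}^{\oplus n}\to\det\widetilde M\to0$, so the kernel $K:=\Ker(\bigwedge^r u)$ is a quotient of $E\otimes\bigwedge^{r-1}\cO_{\widetilde X}^{\oplus n}\cong E^{\oplus\binom{n}{r-1}}$. Since $f$ has one-dimensional fibres, $R^2f_*=0$, so $R^1f_*K$ is a quotient of $R^1f_*(E^{\oplus\binom{n}{r-1}})=0$, giving $R^1f_*K=0$. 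Pushing forward $0\to K\to\bigwedge^r\cO_{\widetilde X}^{\oplus n}\to\det\widetilde M\to0$ then shows $f_*(\bigwedge^r\cO_{\widetilde X}^{\oplus n})\to f_*(\det\widetilde M)$ is surjective; as this factors as $\bigwedge^r R^n\to\bigwedge^r M\to f_*(\det\widetilde M)$, the lemma follows.

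The main obstacle is the vanishing $R^1f_*E=0$: everything hinges on feeding the three inputs --- rationality $R^1f_*\cO_{\widetilde X}=0$, the identity $f_*\widetilde M=M$, and surjectivity of $R^n\to M$ --- into a single long exact sequence. Once this is secured, the exterior-algebra bookkeeping ($R^2f_*=0$ and the right-exact wedge sequence) is routine, so the points I would verify carefully are the exactness of $E\otimes\bigwedge^{r-1}\cO_{\widetilde X}^{\oplus n}\to\bigwedge^r\cO_{\widetilde X}^{\oplus n}\to\det\widetilde M\to0$ and that $f_*$ of $\bigwedge^r u$ genuinely factors through $\bigwedge^r M$.
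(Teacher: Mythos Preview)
The paper does not supply a proof of this lemma; it is simply quoted from \cite[Lemma~1.8]{H}. Your argument is correct and is essentially the expected one: deduce $R^1f_*E=0$ for the syzygy bundle $E=\Ker(\cO_{\widetilde X}^{\oplus n}\to\widetilde M)$ from rationality together with the surjectivity of $\cO_X^{\oplus n}\to M$, then use the Koszul-type right-exact sequence and the vanishing of $R^2f_*$ (fibres of dimension $\le 1$) to transfer this vanishing to $K=\Ker(\bigwedge^r u)$. The two points you flag at the end are indeed routine: the right-exactness of $E\otimes\bigwedge^{r-1}\cO_{\widetilde X}^{\oplus n}\to\bigwedge^r\cO_{\widetilde X}^{\oplus n}\to\det\widetilde M\to 0$ holds for any surjection of sheaves (and here $E$ is even locally free, since $\widetilde X$ is smooth and $\widetilde M$ is locally free), and the factorisation of $f_*(\bigwedge^r u)$ through $\bigwedge^r M$ follows by writing the natural map as the adjoint of $f^*\bigwedge^r M=\bigwedge^r f^*M\to\det\widetilde M$ and using naturality of the adjunction unit together with $f_*\cO_{\widetilde X}=\cO_X$.
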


\begin{prp}
The $e$-th F-blowup $\FB_e(X)$ of a rational surface singularity $(X,x)$ 
is dominated by the minimal resolution $\widetilde X$ and has only 
rational singularities for all $e\ge 0$. 
\end{prp}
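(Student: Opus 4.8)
The plan is to apply the structure theory recalled above to the specific reflexive module $M = F^e_*\cO_X = \cO_X^{1/p^e}$. First I would invoke the Artin--Verdier result quoted at the start of the section: since $F^e_*\cO_X$ is a reflexive $\cO_X$-module (being the pushforward of the structure sheaf of the normal surface $X$ under the finite Frobenius, it is even Cohen--Macaulay, hence reflexive), its torsion-free pullback $\widetilde{M} = f^{\star}\cO_X^{1/p^e}$ to the minimal resolution $\widetilde{X}$ is locally free and $f$-generated. In particular $\widetilde{M}$ is \emph{flat}, which is precisely the hypothesis needed to put ourselves in the situation of Proposition~\ref{FBsurface}: the minimal resolution $f$ is itself a flattening of $\cO_X^{1/p^e}$.

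Since $f$ is a flattening, it factors through the universal flattening, which by definition is the F-blowup. Concretely, the universal property in the definition of $\Bl_{\M}(X)$ says that any flattening $g\colon Z\to X$ factors through $\FB_e(X)=\Bl_{F^e_*\cO_X}(X)$; applying this to $g=f$ gives a birational morphism $\widetilde{X}\to\FB_e(X)$ over $X$. This is exactly the assertion that $\FB_e(X)$ is \emph{dominated by the minimal resolution} $\widetilde{X}$. This part of the statement is essentially formal once the flatness of $\widetilde{M}$ is in hand, and I expect no obstacle here.

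For the second assertion --- that $\FB_e(X)$ has only rational singularities --- I would exploit the embedding described in the last subsection of the Preliminaries. Writing $I=\I_{\M}$ for the fractional ideal attached to $M=\cO_X^{1/p^e}$, we have $\FB_e(X)=\Bl_I(X)$, and the morphism $\Phi_e\colon\widetilde{X}\to\FB_e(X)$ satisfies $\Phi_e^*\cO_{\PP}(1)\cong\det\widetilde{M}=\det f^{\star}\cO_X^{1/p^e}$, with $\Phi_e(\widetilde{X})=\FB_e(X)$. The key point is that $\FB_e(X)$ is the image of the rational surface $\widetilde{X}$ under a morphism defined by a linear system, so it is normal away from finitely many points, and each of its singular points is the image of a connected union of exceptional curves contracted by $\Phi_e$. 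To see rationality I would argue on $\widetilde{M}=f^{\star}M$: because $(X,x)$ is a rational singularity and $\widetilde{M}$ is $f$-generated and locally free, the projection formula together with $R^1f_*\cO_{\widetilde X}=0$ gives $R^1f_*\widetilde{M}=0$ (this is precisely the vanishing noted in the excerpt). One then transports this vanishing across the factorization $f\colon\widetilde{X}\xrightarrow{\Phi_e}\FB_e(X)\to X$ to conclude that the local rings of $\FB_e(X)$ are rational: the fibers of $\Phi_e$ are trees of rational curves (being subconfigurations of the exceptional locus of the rational singularity), and $R^1\Phi_{e*}\cO_{\widetilde X}=0$ by a Leray/Grauert--Riemenschneider--type argument combined with the rationality of $(X,x)$.

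The main obstacle, and the step I would devote the most care to, is the rationality claim for $\FB_e(X)$ itself as opposed to the already-established rationality of $(X,x)$. Dominance by $\widetilde{X}$ only tells us that $\FB_e(X)$ sits between $X$ and its minimal resolution; I must still rule out that the contraction $\Phi_e$ produces a non-rational singularity. The cleanest route is to verify $R^1\Phi_{e*}\cO_{\widetilde X}=0$ directly: since $\widetilde{X}$ is a rational surface singularity's resolution, any partial contraction of a subset of its exceptional $(-2)$-type configuration again yields a rational singularity, because rationality of a normal surface singularity is inherited under such partial blow-downs. Formally, if $Z\to X$ is any normal birational morphism dominated by $\widetilde{X}$, then $R^1$ of the structure sheaf vanishes along the contracted locus whenever it vanishes on $\widetilde{X}\to X$, and $\FB_e(X)$ is exactly such an intermediate $Z$. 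Making this inheritance precise, and confirming that $\FB_e(X)$ is normal enough for ``rational singularity'' to be the correct notion, is where the real work lies.
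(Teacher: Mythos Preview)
Your argument that $\widetilde X$ dominates $\FB_e(X)$ is correct and is exactly what the paper does: reflexivity of $F^e_*\cO_X$ together with the Artin--Verdier fact makes $f^{\star}F^e_*\cO_X$ locally free, so $f$ is a flattening and hence factors through the universal one.

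The gap is in the rationality part, and you identify it yourself in your last sentence: you have not shown that $\FB_e(X)$ is \emph{normal}. Without normality the statement ``$\FB_e(X)$ has rational singularities'' is not even well-posed in the usual sense, and your inheritance argument (``any partial contraction of the exceptional configuration of a rational singularity is again rational'') only applies to the \emph{normal} intermediate surface obtained by contracting a subset of the $E_i$. A priori $\FB_e(X)$ could be a non-normal scheme whose normalization is that intermediate surface; the universal flattening of a module has no reason to be normal in general (and indeed the paper later exhibits non-normal F-blowups of simple elliptic singularities). Your proposed route via $R^1\Phi_{e*}\cO_{\widetilde X}=0$ does not close this gap, because that vanishing would hold equally well for the map to the normalization.

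The paper handles this by a different and quite short device. Using Lemma~\ref{complete}, the fractional ideal $I=I_M$ attached to $M=R^{1/p^e}$ can be identified with $H^0(\widetilde X,\det\widetilde M)$, which is an integrally closed (complete) ideal of the two-dimensional rational local ring $R$. Lipman's theory of complete ideals on rational surface singularities \cite[Proposition~8.1]{L} then gives that the Rees algebra $R[It]$ is normal, hence $\FB_e(X)=\Proj R[It]$ is normal. Once normality is in hand, rationality follows immediately from Artin's result \cite{A1} (equivalently, from the fact that $\FB_e(X)$ is then a genuine partial contraction of $\widetilde X$). So the missing idea in your proposal is precisely this appeal to Lemma~\ref{complete} and Lipman's normality theorem for complete ideals; your Leray-type argument would only kick in after that.
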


\begin{proof}
Since $M:=R^{1/p^e}$ is a reflexive $R$-module, its torsion-free 
pullback $\widetilde{M}=f^{\star}R^{1/p^e}$ to $\widetilde X$ is 
flat, so that the minimal resolution $f\colon\widetilde{X}\to X$ 
factors through the universal flattening $\FB_e(X)$ of $R^{1/p^e}$. 
On the other hand, the ideal $I=I_M$ for $M=R^{1/p^e}$ is 
$$I=H^0(\widetilde{X},\det\widetilde{M})$$ 
by Lemma \ref{complete}, so that we can take $I$ to be an integrally 
closed ideal in $R$, 
or \emph{complete} ideal in the sense of Lipman \cite{L}. Then the 
Rees algebra $R[It]$ is normal by \cite[Proposition 8.1]{L}, so 
$\FB_e(X)=\Proj R[It]$ is normal. It then follows from \cite{A1} 
that $\FB_e(X)$ has only rational singularities. 
\end{proof}

\begin{crl}
Let $(X,x)$ be a rational surface singularity over $k$. 
\begin{enumerate}
\renewcommand{\labelenumi}{(\arabic{enumi})}
\item For any $e\ge 0$, the $e$-th F-blowup $\FB_e(X)$ is obtained 
by contracting part of exceptional curves $E_1,\dots,E_s$ on the 
minimal resolution $\widetilde X$ to normal points with at most 
rational singularities. 
\item The  minimal resolution $\widetilde X$ of $(X,x)$ is obtained 
by finitely many iteration of F-blowups. More explicitly, for any 
sequence of positive integers $e_1,\dots,e_s$, we have 
$\widetilde{X}=
 \FB_{e_s}(\FB_{e_{s-1}}(\cdots\FB_{e_2}(\FB_{e_1}(X))\cdots))$. 
\end{enumerate}
\end{crl}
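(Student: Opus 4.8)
The plan is to derive both statements from the Proposition together with the basic properties of blowups at modules and the structure theory of rational surface singularities. For part (1), I would start from the Proposition, which tells us that $\FB_e(X)$ is dominated by the minimal resolution $\widetilde X$ and has only rational singularities. Since the birational morphism $\Phi_e \colon \widetilde X \to \FB_e(X)$ (coming from the factorization of $f$ through the universal flattening, as in the proof of the Proposition) is a map between normal surfaces that is an isomorphism away from the exceptional locus, I would invoke the standard fact that such a morphism is a composition of contractions of exceptional curves. Concretely, $\Phi_e$ contracts exactly those $E_i$ with $c_1(f^{\star}\cO_X^{1/p^e})E_i=0$ and keeps those with $c_1(f^{\star}\cO_X^{1/p^e})E_i>0$ as curves, by Proposition~\ref{FBsurface}. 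The images of the contracted curves are normal points, and they carry at most rational singularities because, as the Proposition already asserts, the whole surface $\FB_e(X)$ has only rational singularities. This gives (1).

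For part (2), the key observation is that each F-blowup strictly improves the singularity unless we are already at the minimal resolution, and that this improvement is measured by the dropping of the number of exceptional curves that still get contracted. First I would note that by part (1) every $\FB_e(X')$ of a rational surface singularity $X'$ is again a surface with only rational singularities dominated by its own minimal resolution; moreover the minimal resolution of $\FB_e(X')$ is dominated by $\widetilde X$, so iterating F-blowups stays within the tower below $\widetilde X$. Next I would argue that for each $e_j \ge 1$, taking the $e_j$-th F-blowup of a non-smooth rational point $y$ is a nontrivial modification there, since by the remark following the definition of F-blowups (citing \cite{Ku}) the $e$-th F-blowup is an isomorphism \emph{exactly} over the regular locus when $e>0$. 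Hence each F-blowup in the sequence contracts strictly fewer of the $E_i$ than its predecessor did, so the number of curves contracted strictly decreases at each step as long as a singular point remains.

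Since there are only $s$ exceptional curves $E_1,\dots,E_s$ on $\widetilde X$, this strict decrease can happen at most $s$ times, so after at most $s$ steps no curve is contracted, which is to say the resulting surface is the minimal resolution $\widetilde X$ itself. Therefore any composite $\FB_{e_s}(\FB_{e_{s-1}}(\cdots\FB_{e_1}(X)\cdots))$ with all $e_j \ge 1$ equals $\widetilde X$. I expect the main obstacle to be making the ``strictly fewer curves contracted at each step'' argument fully rigorous: one must check that the minimal resolution of the intermediate surface $\FB_{e_j}(\cdots)$ is canonically a partial blowdown of $\widetilde X$, so that the curves contracted at successive stages can be compared as subsets of the fixed set $\{E_1,\dots,E_s\}$, and that no singular point can persist indefinitely under F-blowups. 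This requires combining the $R^1f_*\widetilde M=0$ vanishing and the $f$-generation property (from the Artin--Verdier result \cite{AV} quoted before Lemma~\ref{complete}) with the fact that $F^e_*$-modules become locally free precisely over the smooth locus, and then tracking the exceptional curves through the tower of modifications.
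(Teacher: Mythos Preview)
Your proposal is correct and follows the natural line of argument; the paper itself states the Corollary without proof, treating it as immediate from the preceding Proposition, so your write-up is essentially a careful unpacking of what the authors leave implicit.

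One small remark on the ``obstacle'' you flag at the end: it dissolves once you observe that $\widetilde X$ is automatically the minimal resolution of every intermediate surface in the tower. Indeed, since $\widetilde X$ is the minimal resolution of $X$, none of the $E_i$ is a $(-1)$-curve; hence the resolution $\widetilde X \to \FB_{e_j}(\cdots)$, whose exceptional set is a subconfiguration of $\{E_1,\dots,E_s\}$, is again minimal. So the curves contracted at each stage really are subsets of the fixed set $\{E_1,\dots,E_s\}$, and your counting argument (the number of contracted curves drops by at least one per step, by Kunz) goes through cleanly to give the bound $s$ on the number of iterations.
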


The behavior of F-blowups is especially nice for F-regular surface 
singularities. Namely, the $e$-th F-blowup of any F-regular surface 
singularity is the minimal resolution for $e\gg 0$ \cite{H}. 
We next consider F-blowups of non-F-regular rational double points 
more in detail. 


\medskip 
In what follows, we use the notation of Artin in \cite{A2} for rational 
double points in positive characteristic. 

Among non-F-regular rational double points, the case of Frobenius sandwiches is easier 
to analyze their F-blowups. Let $X$ be a Frobenius sandwich of a smooth surface $S$, 
i.e., the Frobenius morphism of $S$ factors as $F: S \xrightarrow{\pi} X \rightarrow S$. 
Then F-blowups of $X$ are also the universal flattening of the reflexive $\cO_X$-module 
$\pi_{\ast} \cO_{S}$ (\cite[Proposition 4.3]{HS}). Thanks to this observation, we can study 
F-blowups of the Frobenius sandwich $X$ via 
$\pi_*\cO_S$ instead of $F^e_*\cO_X$. 
For example, we find whether the irreducible exceptional curve $E_i$ appears on 
$\FB_e (X)$ or not by evaluating the intersection number $c_1(f^{\star}(\pi_*\cO_S))E_i$ 
in Proposition \ref{FBsurface}. 

\subsection{$D_{2n}^0$-singularities.} 
Here we consider a $D_{2n}^0$-singularity for $n \geq 2$ in $p=2$ as a Frobenius sandwich. 
Let $\AA^2=\Spec k[x,y]$ and $\pi \colon \AA^2 \to X=\AA^2/\delta$ the quotient 
map by a vector field 
$\delta=(x^2+nxy^{n-1}){\der}/{\der x}+y^n {\der}/{\der y} \in \Der_k\, \cO_{\AA^2}$. 
Here 
$$\cO_X=k[x,y]^{\delta}=k[x^2,y^2,x^2y+xy^n] \cong k[X,Y,Z]/(Z^2+X^2Y+XY^n)$$
and $X$ has a $D_{2n}^0$-singularity. Then $R=\cO_X=k[X,Y,Z]/(Z^2+X^2Y+XY^n)$ is a 
graded ring with $\deg X = 2(n-1), \deg Y = 2$ and $\deg Z = 2n-1$. The $4(n-1)$-th 
Veronese ring of $R$ is 
$$R^{(4(n-1))}=k[X^2,Y^{2(n-1)},XY^{n-1}] \cong k[u,v,w]/(w^2-uv).$$
Set $x_0 = u^{1/2} = X = x^2$ and $x_1 = v^{1/2} = Y^{n-1} = y^{2(n-1)}$. Then 
$R^{(4(n-1))}\cong k[x_0^2,x_1^2,x_0x_1] = k[x_0,x_1]^{(2)}$, so that $\Proj R\cong\PP^1$ 
with homogeneous coordinates $(x_0:x_1)=(x^2:y^{2(n-1)})$. Let $s=x_1/x_0=y^{2(n-1)}/x^2$ 
be the affine coordinate of $U_0=D_+(x_0) \subset \Proj R \cong \PP^1$ and pick a 
homogeneous element $t=Z/X=y(x+y^{n-1})/x \in R$ of degree $1$. 
Since 
$$t^{2(n-1)}=\dfrac{x_1(x_1-x_0)^{n-1}}{x_0^{n-1}},$$
the $\Q$-divisor 
$$D=\dfrac{1}{2(n-1)}(0)+\dfrac{1}{2}(1)-\dfrac{1}{2}(\infty)$$
on $\PP^1$ gives $R=\bigoplus_{n \geq 0} H^0(\PP^1,nD)t^n$ (the Pinkham-Demazure construction). 

Let $g\colon X' \rightarrow X=\Spec R$ be the weighted blowup with respect 
to the weight $(2(n-1), 2, 2n-1)$. Then 
$X' \cong \Spec_{\!\!\PP^1}(\bigoplus_{n \geq 0}\cO_{\PP^1}(nD)t^n)$ admits an  
affine morphism $\rho \colon X' \to \PP^1$ that is an $\AA^1$-bundle over 
$\PP^1 \backslash \{0,1,\infty \}$, and the exceptional curve of $g$ is the 
negative section $E\cong \PP^1$ of $\rho$. Let $X_0'=\rho^{-1}U_0$. 
Then 
$$\cO_{X_0'}=k\left[ s,t, \dfrac{t^2}{s-1}, \dfrac{t^3}{s-1}, \ldots , 
\dfrac{t^{2(n-1)-2}}{(s-1)^{n-2}},\dfrac{t^{2(n-1)-1}}{(s-1)^{n-2}},\dfrac{t^{2(n-1)}}{s(s-1)^{n-1}}\right]
$$
and $X'$ has an $A_{2n-3}$-singularity on $E|_{X_0'} \cong \Spec k[s]$ at $s=0$. 

To resolve the $A_{2n-3}$-singularity, we may replace $X_0'=\rho^{-1}U_0$ 
by $V=\rho^{-1}(U_0 \backslash \{1\})$. The affine coordinate ring of $V$ is 
$$\cO_V = \cO_{X_0'}\left[\dfrac{1}{s-1}\right]=k[s,t,t^{2(n-1)}/s]_{s-1}.$$
The minimal resolution $h \colon \widetilde{V} \to V$ of $V$ is given by 
$\widetilde{V}=\bigcup_{i=1}^{2(n-1)} \widetilde{V}_i$ 
where $\widetilde{V_i}=\Spec k[s/t^{i-1},t^i/s]_{s-1}$. 
Let $\widetilde{E} \cong \PP^1$ be the $h$-exceptional curve lying on 
$\widetilde{V}_{n-2} \cup \widetilde{V}_{n-1}$: 
$$
\widetilde{E} = \Spec k[t^{n-2}/s] \cup \Spec k[s/t^{n-2}] 
              \subset \widetilde{V}_{n-2} \cup \widetilde{V}_{n-1}.$$

Now suppose that $n$ is even; $n=2k$ with $k \geq 1$. 
Let $f=t^{n-2}/s(s-1)^{k-1}$ and $g=t^{n-1}/s(s-1)^{k-1}$. 
Then $f,g \in \cO_{\widetilde{V}_{n-2}}$ and $x=g+yf$. 
Thus
$$(h \circ g)^{\star} (\pi_{\ast} \cO_{\AA^2})|_{\widetilde{V}_{n-2}}
   = \Im(\cO_{\widetilde{V}_{n-2}}\otimes_{\cO_X}\cO_{\AA^2}\rightarrow k(\AA^2))
   = k[s/t^{n-3},t^{n-2}/s,x,y]_{s-1}$$
is a free $\cO_{\widetilde{V}_{n-2}}$-module with basis $1,y$. 
Similarly it follows that $(h\circ g)^{\star}(\pi_*\cO_{\AA^2})|_{\widetilde{V}_{n-1}}$ 
is a free $\cO_{\widetilde{V}_{n-1}}$-module with basis $1,x$. 
The transition matrix of the two bases on $\widetilde{V}_{n-2} \cap \widetilde{V}_{n-1}$ 
is given by  
$$(1\ x)=(1\ y)\left( \begin{array}{cc}
                     1 & t^{n-1}/s(s-1)^{k-1} \\
                     0 & t^{n-2}/s(s-1)^{k-1}
                      \end{array}\right).$$
Since $s-1$ is a unit on $V$, the intersection number of 
$L=c_1((h\circ g)^{\star}(\pi_*\cO_{\AA^2}))$ with $\widetilde{E}$ is $L\widetilde{E}=1$. 
By the following lemma in \cite{AV}, this means that the reflexive $\cO_X$-module 
$\pi_{\ast}\cO_{\AA^2}$ of rank $2$ is the indecomposable one corresponding to 
$\widetilde{E}$, or equivalently, to the solid circle in the figure below, on 
the minimal resolution $\widetilde{X}$: 

\noindent {\unitlength 0.1in
\begin{picture}( 60.0000, 11.5000)
\put(2.0000,3.5000){\makebox(0,0)[lb]{$\xygraph{
    \circ  - [r]
    \circ (
        - [u] \circ ,
        - [r]
)} \cdots \xygraph{
           - [r] \circ
           - [r] \bullet 
           - [r] \circ 
           - [r]
)} \cdots \xygraph{
           - [r] \circ
)}$}}
\put(2.200,0.8000){\makebox(0,0)[lb]{$\underbrace{\,\hspace{47mm}\,}_{n-1}$}}
\put(29.5000,0.8000){\makebox(0,0)[lb]{$\underbrace{\,\hspace{35mm}\,}_{n-1}$}}
\end{picture}}\vspace{1mm}
%
In the case where $n=2k+1$ with $k \geq 1$, we obtain the same conclusion that 
$c_1((h\circ g)^{\star} (\pi_{\ast} \cO_{\AA^2})) \cdot \widetilde{E} =1$ and 
$c_1((h\circ g)^{\star} (\pi_{\ast} \cO_{\AA^2})) \cdot E_i =0$ if $E_i\ne\widetilde{E}$.

Therefore $\FB_e (X)$ coincides with the normal surface obtained by 
contracting all exceptional curves on $\widetilde X$ except $\widetilde{E}$,
for all $e \geq 1$. 
In particular, $\FB_e (X)$ is not the minimal resolution for all $e \geq 1$. 

Similarly we see that the F-blowups 
of rational double points of type $E_7^0$, $E_8^0$ in $p=2$, 
$E_6^0$, $E_8^0$ in $p=3$ and $E_8^0$ in $p=5$ 
are obtained by contracting exceptional curves on the minimal resolution except 
one irreducible exceptional curve; 
see \cite{HS}. 

\begin{lmm}[Artin-Verdier \cite{AV}]\label{McKay}
Let $(X,x)$ be a two-dimensional rational double point and let $f : \widetilde{X} \rightarrow X$ be the minimal resolution. 
Let $E_1, \ldots , E_s$ be the irreducible exceptional curves of $f$. 
Then there exists a one-to-one correspondence between the exceptional curves $E_i$ of $f$ 
and the isomorphism classes of non-trivial indecomposable reflexive $\cO_X$-modules $M_i$, 
and $c_1(f^{\star}M_i) \cdot E_j=\delta_{ij}$. 
\end{lmm}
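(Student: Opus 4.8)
The plan is to study the \emph{first Chern class map} on reflexive modules induced by the Artin--Verdier pullback. For a reflexive $\cO_X$-module $M$ of rank $r$, the pullback $\widetilde M=f^{\star}M$ is locally free and $f$-generated, so I would attach to $M$ the vector $\mathbf c(M):=(c_1(\widetilde M)\cdot E_1,\dots,c_1(\widetilde M)\cdot E_s)\in\Z^s$, where $c_1(\widetilde M)$ is the class of $\det\widetilde M$. First I would record that $\mathbf c$ is additive on direct sums and that $\mathbf c(\cO_X)=0$, since $f^{\star}\cO_X=\cO_{\widetilde X}$. Because $\widetilde M$ is $f$-generated there is a surjection $\cO_{\widetilde X}^{\oplus n}\twoheadrightarrow\widetilde M$, and restricting to each $E_j\cong\PP^1$ gives $c_1(\widetilde M)\cdot E_j\ge 0$; hence $\mathbf c(M)\in\Z_{\ge 0}^s$. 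I would then prove that $\mathbf c(M)=0$ forces $\widetilde M$ to be trivial: an $f$-generated bundle of degree $0$ on every exceptional $\PP^1$ is free on the whole exceptional fibre and therefore on a neighbourhood of it, so $M=f_*\widetilde M\cong\cO_X^{\oplus r}$, and by indecomposability $M\cong\cO_X$. Thus $\mathbf c$ vanishes exactly on the free modules.

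Next I would establish \textbf{injectivity} of the correspondence. The functors $f_*$ and $f^{\star}$ are mutually inverse between reflexive $\cO_X$-modules and $f$-generated locally free $\cO_{\widetilde X}$-modules (the content of the Artin--Verdier result quoted before Lemma~\ref{complete}, together with $R^1f_*\widetilde M=0$), so it suffices to separate the bundles $\widetilde M$ by their Chern vectors. Here I would use that the intersection matrix $(E_i\cdot E_j)$ is the negative of the $ADE$ Cartan matrix, in particular negative definite, so that $\det\widetilde M$ is determined on the exceptional fibre by $\mathbf c(M)$. Combining this with Lemma~\ref{complete}, which identifies the associated complete ideal $I_M$ with $H^0(\widetilde X,\det\widetilde M)$, pins down $\det\widetilde M$ and, through the finite list of indecomposables, the module $M$ itself. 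The upshot I am after is that distinct nontrivial indecomposables have distinct vectors $\mathbf c(M)\in\Z_{\ge 0}^s\setminus\{0\}$.

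The remaining and hardest point is \textbf{realization}: for each $i$ I must produce an indecomposable reflexive $M_i$ with $\mathbf c(M_i)=(\delta_{i1},\dots,\delta_{is})$, that is $c_1(f^{\star}M_i)\cdot E_j=\delta_{ij}$. The naive idea of taking a line bundle $L$ on $\widetilde X$ with $c_1(L)\cdot E_j=\delta_{ij}$ fails in two ways: such an $L$ need not exist integrally, since the Cartan matrix is not unimodular outside type $E_8$; and even when it exists, the Artin--Verdier pullback $f^{\star}f_*L$ differs from $L$ unless $L$ is already $f$-generated, so $\mathbf c(f_*L)$ is not the standard basis vector in general. This is exactly why the $M_i$ must have higher rank, equal to the multiplicity of $E_i$ in the fundamental cycle. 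To overcome this I would invoke the theory of \emph{special} reflexive modules: a nontrivial indecomposable $M$ is special precisely when $\mathbf c(M)$ is a standard basis vector, special indecomposables are in bijection with the $E_i$, and for a rational double point \emph{every} nontrivial indecomposable reflexive is special. Establishing this last statement is the main obstacle; I would handle it using that an $ADE$ singularity has finite Cohen--Macaulay type, so that the reflexive (equivalently maximal Cohen--Macaulay) modules form a finite list whose Auslander--Reiten quiver is the Dynkin diagram, and then match this quiver with the dual graph of $f$ to read off $c_1(f^{\star}M_i)\cdot E_j=\delta_{ij}$. Together with the injectivity above, this yields the desired one-to-one correspondence.
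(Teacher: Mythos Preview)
The paper does not give a proof of this lemma; it is stated with attribution to Artin--Verdier \cite{AV} and used as a black box. So there is no in-paper argument to compare against, and your proposal should be read as an attempt to supply what \cite{AV} proves.

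Your outline isolates the right invariant $\mathbf c(M)$, but the realization step is circular. You propose to show that every nontrivial indecomposable reflexive module over a rational double point is ``special'' (i.e.\ has $\mathbf c(M)$ equal to a standard basis vector) by matching the Auslander--Reiten quiver of the category of maximal Cohen--Macaulay modules with the dual resolution graph. But that identification, together with the Chern-class labeling of the vertices, \emph{is} the content of the Artin--Verdier/Auslander theorem; it is not a lighter input you can invoke to deduce the lemma. Your injectivity paragraph has the same problem in miniature: you pass from ``$\det\widetilde M$ is determined on the exceptional fibre'' to ``$M$ is determined, through the finite list of indecomposables,'' but finite CM type alone does not tell you that distinct indecomposables have distinct $\mathbf c$-vectors---that is exactly the injectivity you are trying to prove.

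What is missing is a direct construction of the $M_i$. In \cite{AV} one builds $\widetilde M_i$ on $\widetilde X$ as an extension
\[
0 \lra \cO_{\widetilde X}^{\,r_i-1} \lra \widetilde M_i \lra \cO_{\widetilde X}(D_i) \lra 0
\]
with $D_i\cdot E_j=\delta_{ij}$ and $r_i$ the coefficient of $E_i$ in the fundamental cycle $Z$; one checks that $\widetilde M_i$ is $f$-generated with $R^1f_*\widetilde M_i=0$, so $M_i:=f_*\widetilde M_i$ is reflexive with $f^{\star}M_i=\widetilde M_i$. The exhaustion and indecomposability then come from the numerical identity $c_1(\widetilde M)\cdot Z=\mathrm{rank}\,M$ for any nontrivial indecomposable (a consequence of $Z=-K_{\widetilde X}$ on a rational double point), which forces $\mathbf c(M)$ to be a single standard basis vector and the rank to be the corresponding $r_i$. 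If you want a self-contained proof, this construction and the pairing with $Z$ are the ingredients to supply.
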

\if0
\begin{lmm}[Artin-Verdier \cite{AV}]\label{McKay}
Let $(X,x)$ be a two-dimensional rational double point and let $f : \widetilde{X} \rightarrow X$ be the minimal resolution. 
Let $E_1, \ldots , E_s$ be the irreducible exceptional curves of $f$ and write the fundamental cycle 
as $Z_0=\sum_{i=1}^s r_i E_i$. 
\begin{enumerate}[\normalfont \rmfamily (1)]
\item There is a one-to-one correspondence between the exceptional curves $E_i$ of $f$ 
and the isomorphism classes of non-trivial indecomposable reflexive $\cO_X$-modules $M_i$. 
\item The torsion-free pullback $f^{\star}M_i$ of each $M_i$ is a locally free $\cO_{\widetilde{X}}$-module of rank $r_i$, 
and $c_1(f^{\star}M_i) \cdot E_j=\delta_{ij}$. 
\end{enumerate}
\end{lmm}
\fi

\subsection{} We can study F-blowups of other rational double points 
with Macaulay2 computation. 
First we consider a $D_4^1$-singularity in $p=2$. 
Let $R=k[x,y,z]/(z^2+x^2y+xy^2+xyz)$ and $X=\Spec R$. 
Then $X$ has a $D_4^1$-singularity. 
Using Macaulay2 function \verb+frobeniusPushForward+ in Section 2.3.3, 
we see that the presentation matrix of 
$F_{\ast} R$ is equivalent to 
$$\left(\begin{array}{cc}
z  & x+y+z  \\
xy & z    
\end{array}\right) \oplus 
\left(\begin{array}{cc}
z        & y  \\
x(x+y+z) & z    
\end{array}\right) \oplus 
\left(\begin{array}{cc}
z & y(x+y+z)  \\
x & z    
\end{array}\right) \oplus 0,$$
where $0$ is the zero matrix of size $1$.
Then the cokernel of each matrix of size $2$ defines 
a nontrivial reflexive 
$R$-module of rank $1$ and those reflexive 
$R$-modules are different from each other. Thus $\FB_1 (X)$ coincides 
with the normal surface obtained by contracting the exceptional curve 
on the minimal resolution $\widetilde{X}$ corresponding to the blank circle in the figure below: 
$$\xygraph{
    \bullet  - [r]
    \circ (
        - [u] \bullet ,
        - [r] \bullet
)}$$
Furthermore, we see that the reflexive 
$R$-module corresponding to the 
central curve in the above figure appears as a direct summand of the Frobenius pushforward of each nontrivial rank $1$ reflexive module. 
Thus $\FB_e (X)$ is the minimal resolution for $e \geq 2$, since the 
$D_4^1$-singularity is F-pure. 
A similar result holds for the case of a $D_5^1$-singularity. 
Note that $D_4^1$ and $D_5^1$-singularities are not F-regular. 

\begin{rem}
The $D_4^1$-singularity is a (wild) quotient singularity, i.e., 
there exists a finite group $G$ of automorphisms of $M=\Spec k[[x,y]]$ 
such that the quotient $X:=M/G$ has the $D_4^1$-singularity. 
F-blowups of a tame quotient singularity are always dominated by the $G$-Hilbert scheme, 
but the same dose not hold for the $D_4^1$-singularity. 
Let $R=k[[x,y]]^G \subset S=k[[x,y]]$ be the invariant subring. 
Then $S$ is an $R$-module of rank $2$. 
Thus the blowup of $X$ at $R$-module $S$, which coincides with the $G$-Hilbert scheme $\Hilb^{G} (M)$, 
has at most two irreducible exceptional curves. 
On the other hand, F-blowups $\FB_e (X)$ have three or four irreducible exceptional curves as we have seen above. 
Hence F-blowups $\FB_e (X)$ of the $D_4^1$-singularity are not dominated by the $G$-Hilbert scheme $\Hilb^{G} (M)$ for all $e \geq 1$. 
\end{rem}

Next we consider an $E_6^0$-singularity in $p=2$. 
Let $R=k[x,y,z]/(z^2+x^3+y^2z)$ and $X=\Spec R$. 
Then $X$ has an $E_6^0$-singularity. Write 
$$A_1 = \left(\begin{array}{cccc}
z   & y   & x  & 0 \\
yz  & z   & 0  & x \\
x^2 & 0   & z  & y \\
0   & x^2 & yz & z
\end{array}\right)\hspace{-1mm} ,\hspace{3mm} A_2 = \left(\begin{array}{cccc}
x & y^2+z & y & 0     \\
z & x^2   & 0 & xy    \\
0 & 0     & x & y^2+z \\
0 & 0     & z & x^2
\end{array}\right)$$
and $A_3 = {}^tA_2$. 
Then their cokernels define nontrivial reflexive 
$R$-modules of rank $2$ 
and those 
$R$-modules are different from each other. Now we see that 
presentation matrices of 
$F_{\ast} R$ and $F^2_{\ast} R$ are equivalent to 
$A_1^{\oplus 2}$ and $A_1^{\oplus 4} \oplus A_2^{\oplus 2} \oplus A_3^{\oplus 2}$, 
respectively. Furthermore, a direct summand other than $A_1, A_2$ and $A_3$ 
does not appear in the presentation matrices of 
$F^e_{\ast} R$ for $e \geq 2$. 
Since the blowup of $X$ at $\Coker\, A_1$ has the only one singular point, 
$\FB_e (X)$ coincides with the normal surface obtained by contracting the 
exceptional curves on the minimal resolution $\widetilde{X}$ corresponding 
to the blank circles in the figure below: 

\medskip \medskip \medskip \noindent$e=1$:
\raisebox{-5mm}{$\xygraph{
    \circ - [r]
    \circ - [r]
    \circ (
        - [u] \bullet ,
        - [r] \circ
        - [r] \circ
)}$}
\quad \quad $e \geq 2$:
\raisebox{-5mm}{$\xygraph{
    \circ   - [r]
    \bullet - [r]
    \circ (
        - [u] \bullet ,
        - [r] \bullet
        - [r] \circ
)}$}

\medskip \medskip \medskip The results so far are summarized as follows: 

\begin{prp}[cf. {\cite[Example 4.8]{HS}}]\label{FB_of_RDP}
Let $(X,x)$ be a rational double point of type 
$D_{2n}^0$ for $n \geq 2$, $D_4^1$, $D_5^1$, $E_6^0$, $E_7^0$, $E_8^0$ in $p=2$, 
$E_6^0$, $E_8^0$ in $p=3$ or $E_8^0$ in $p=5$. 
Then F-blowups $\FB_e (X)$ coincide 
with the normal surfaces obtained by contracting the exceptional curves 
on the minimal resolution $\widetilde{X}$ 
corresponding to the blank circles in the figure below{\rm :}

\medskip \noindent {\rm (1)} $D_{2n}^0$-singularity for $n \geq 2$ in $p=2${\rm :} 

\noindent {\unitlength 0.1in
\begin{picture}( 60.0000, 11.5000)
\put(2.0000,3.5000){\makebox(0,0)[lb]{$\xygraph{
    \circ  - [r]
    \circ (
        - [u] \circ ,
        - [r]
)} \cdots \xygraph{
           - [r] \circ
           - [r] \bullet 
           - [r] \circ 
           - [r]
)} \cdots \xygraph{
           - [r] \circ
)}$}}
\put(2.200,0.8000){\makebox(0,0)[lb]{$\underbrace{\,\hspace{47mm}\,}_{n-1}$}}
\put(29.5000,0.8000){\makebox(0,0)[lb]{$\underbrace{\,\hspace{35mm}\,}_{n-1}$}}
\end{picture}}

\noindent {\rm (2)} $D_4^1\ (resp.\ D_5^1)$-singularity in $p=2${\rm :}

\medskip \medskip \noindent $e=1${\rm :} \quad \raisebox{-5mm}{$\xygraph{
    \bullet  - [r]
    \circ (
        - [u] \bullet ,
        - [r] \bullet
)}$} \quad \quad $\left( resp. \hspace{3mm}
\raisebox{-5mm}{\xygraph{
    \circ  - [r]
    \bullet (
        - [u] \circ ,
        - [r] \circ
        - [r] \bullet
)}} \right)$

\medskip \medskip On the other hand, $F$-blowups $\FB_e (X)$ of both singularities are the minimal resolution for $e \geq 2$. 

\medskip \noindent {\rm (3)} $E_6^0$-singularity in $p=2${\rm :}

\medskip \noindent $e=1${\rm :}
\raisebox{-5mm}{$\xygraph{
    \circ - [r]
    \circ - [r]
    \circ (
        - [u] \bullet ,
        - [r] \circ
        - [r] \circ
)}$}
\quad \quad $e \geq 2${\rm :}
\raisebox{-5mm}{$\xygraph{
    \circ   - [r]
    \bullet - [r]
    \circ (
        - [u] \bullet ,
        - [r] \bullet
        - [r] \circ
)}$}

\medskip \medskip \medskip \noindent {\rm (4)} $E_7^0$-singularity in $p=2${\rm :}\quad 
\raisebox{-5mm}{$\xygraph{
    \circ  - [r]
    \circ  - [r]
    \circ (
        - [u] \bullet ,
        - [r] \circ
        - [r] \circ
        - [r] \circ
)}$}

\medskip \medskip \medskip \noindent {\rm (5)} $E_8^0$-singularity in $p=2${\rm :}\quad
\raisebox{-6mm}{$\xygraph{
    \bullet  - [r]
    \circ  - [r]
    \circ (
        - [u] \circ ,
        - [r] \circ
        - [r] \circ
        - [r] \circ
        - [r] \circ
)}$}

\medskip \medskip \medskip \noindent {\rm (6)} $E_6^0$-singularity in $p=3${\rm :}\quad
\raisebox{-5mm}{$\xygraph{
    \circ  - [r]
    \circ  - [r]
    \bullet (
        - [u] \circ ,
        - [r] \circ
        - [r] \circ
)}$}

\medskip \medskip \medskip \noindent {\rm (7)} $E_8^0$-singularity in $p=3${\rm :}\quad
\raisebox{-5mm}{$\xygraph{
    \circ  - [r]
    \circ  - [r]
    \circ (
        - [u] \bullet ,
        - [r] \circ
        - [r] \circ
        - [r] \circ
        - [r] \circ
)}$}

\medskip \medskip \medskip \noindent {\rm (8)} $E_8^0$-singularity in $p=5${\rm :}\quad
\raisebox{-5mm}{$\xygraph{
    \circ  - [r]
    \circ  - [r]
    \circ (
        - [u] \circ ,
        - [r] \bullet
        - [r] \circ
        - [r] \circ
        - [r] \circ
)}$}
\end{prp}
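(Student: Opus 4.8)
The plan is to reduce the entire statement to a single module-theoretic computation: determining, for each singularity and each $e$, the decomposition of $F^e_*\cO_X$ (or, in the Frobenius-sandwich cases, of $\pi_*\cO_S$) into indecomposable reflexive $\cO_X$-modules. Indeed, write $F^e_*\cO_X \cong \cO_X^{\oplus b} \oplus \bigoplus_i M_i^{\oplus a_i}$, where the $M_i$ are the nontrivial indecomposable reflexive modules of Lemma \ref{McKay}. Since torsion-free pullback commutes with direct sums and $c_1$ is additive, one gets
\[
c_1(f^{\star} F^e_*\cO_X)\cdot E_j = \sum_i a_i\, c_1(f^{\star} M_i)\cdot E_j = a_j,
\]
using $c_1(f^{\star}\cO_X)=0$ and the Artin--Verdier relation $c_1(f^{\star} M_i)\cdot E_j=\delta_{ij}$. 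By Proposition \ref{FBsurface} this says that $E_j$ maps to a curve on $\FB_e(X)$ precisely when $M_j$ occurs as a summand of $F^e_*\cO_X$, and contracts to a point precisely when it does not. The earlier Corollary guarantees that $\FB_e(X)$ is exactly the normal surface obtained by performing these contractions, so the blank circles in each diagram are forced to be the curves whose associated indecomposable module is absent from the decomposition.

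With this dictionary in hand, each case becomes an identification of summands. For the $D_{2n}^0$-singularities I would invoke the Frobenius-sandwich reduction \cite[Proposition 4.3]{HS}, replacing $F^e_*\cO_X$ by $\pi_*\cO_S$; the Pinkham--Demazure computation carried out above exhibits $\pi_*\cO_S$ as the single rank-two indecomposable attached to the distinguished curve $\widetilde E$, so only $\widetilde E$ survives for every $e\ge 1$. For the $D_4^1$, $D_5^1$ and $E_6^0$ $(p=2)$ cases I would run \texttt{frobeniusPushForward} to produce the explicit presentation matrices, read off the summands, and match them to curves by their ranks together with $c_1(f^{\star}M_i)\cdot E_j=\delta_{ij}$. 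The non-obvious point here is that distinct matrices really present non-isomorphic modules: I would settle this by computing the blowups at the modules via \texttt{villamayorIdeal} and \texttt{reesAlgebra} and checking that they differ, as explained in Section 2.1.

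For the stabilization claims ($e\ge2$) I would argue as follows. For $D_4^1$ and $D_5^1$ the singularity is F-pure, so by \cite{Y3} the F-blowup sequence is monotone; combined with the observation that the central module appears as a summand of $F_*M$ for every nontrivial rank-one reflexive $M$, iterating $F^e_*\cO_X = F^{e-1}_*(F_*\cO_X)$ shows that all indecomposables appear once $e=2$, so $\FB_2(X)=\widetilde X$, and monotonicity together with domination by $\widetilde X$ forces $\FB_e(X)=\widetilde X$ for all $e\ge2$. For $E_6^0$ in $p=2$ the explicit decompositions $F_*\cO_X\sim A_1^{\oplus2}$ and $F^2_*\cO_X\sim A_1^{\oplus4}\oplus A_2^{\oplus2}\oplus A_3^{\oplus2}$, plus the verification that no summand outside $\{A_1,A_2,A_3\}$ ever appears for $e\ge2$, give stabilization directly. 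The remaining types --- $E_7^0,E_8^0$ in $p=2$, $E_6^0,E_8^0$ in $p=3$, and $E_8^0$ in $p=5$ --- are handled by the identical mechanism, the required decompositions being recorded in \cite{HS}.

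The main obstacle I anticipate is not the intersection-number bookkeeping, which is forced by Artin--Verdier, but the module-theoretic identifications underlying it: proving that the matrices produced by the computer are indecomposable, pairwise non-isomorphic, and correctly matched to specific nodes of the resolution graph. Rank alone does not separate the three rank-two summands of $E_6^0$, so the decisive step is distinguishing their isomorphism classes --- for which the blowup-at-a-module computation is the practical tool --- and then pinning each to its node using the branch/long-arm symmetry of the diagram (reflected in the relation $A_3={}^tA_2$) together with $c_1(f^{\star}M_i)\cdot E_j=\delta_{ij}$.
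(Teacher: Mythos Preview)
Your proposal is correct and follows essentially the same route as the paper: it too reduces everything to the Artin--Verdier dictionary via Proposition~\ref{FBsurface} and the normality result for rational surface singularities, then handles $D_{2n}^0$ through the Frobenius-sandwich/Pinkham--Demazure computation, $D_4^1$, $D_5^1$, $E_6^0$ in $p=2$ through explicit \texttt{frobeniusPushForward} decompositions (with stabilization for $D_4^1$, $D_5^1$ coming from F-purity and monotonicity), and the remaining $E$-types by reference to \cite{HS}. Your explicit articulation of the obstacle---separating non-isomorphic summands of equal rank via their associated blowups---matches exactly what the paper does in practice.
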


\medskip \medskip For other rational double points, we have the following:

\begin{exa}
(1) $E_6^1$-singularity in $p=2$: 
Let $R=k[x,y,z]/(z^2+x^2y+xy^2+xyz)$ and $X=\Spec R$. 
Then $X$ has an $E_6^1$-singularity. Write 
$$A = \left(\begin{array}{cccccc}
z      & 0   & 0     & 0      & x & z  \\
0      & z   & y     & 0      & y & x  \\
xy     & yz  & z     & x^2+yz & 0 & 0  \\
0      & 0   & x     & x      & y & 0  \\
x^2    & xz  & 0     & yz     & z & 0  \\
xy+y^2 & x^2 & 0     & xy     & 0 & z
\end{array}\right).$$
Then the cokernel of $A$ defines an indecomposable reflexive 
$R$-module of 
rank $3$. The presentation matrix of 
$F_{\ast} R$ is equivalent to $A \oplus 0$, where 
$0$ is the zero matrix of size $1$. Thus $\FB_1 (X)$ coincides with the normal surface 
obtained by contracting the exceptional curves on the minimal resolution $\widetilde{X}$ 
corresponding to the blank circles in the figure below: 
$$\xygraph{
    \circ  - [r]
    \circ  - [r]
    \bullet (
        - [u] \circ ,
        - [r] \circ
        - [r] \circ
)}$$
\if0
\medskip \noindent (2) $E_7^2$ and $E_7^3$-singularities in $p=2$: 
$E_7^2$ and $E_7^3$-singularities are defined by the equation $z^2+x^3+xy^3+y^3z=0$
 and $z^2+x^3+xy^3++xyz=0$, respectively. In each case, 
$\Coker\, F_{\ast} R$ has two 
kinds of indecomposable reflexive 
$R$-modules. Hence $\FB_1 (X)$ has two 
irreducible exceptional curves. 
\fi
\medskip \noindent (2) $E_8^3$-singularity in $p=2$: 
Let $R=k[x,y,z]/(z^2+x^3+y^5+y^3z)$ and $X=\Spec R$. 
Then $X$ has an $E_8^3$-singularity. 
In this case, 
$F_{\ast} R$ has two kinds of indecomposable reflexive 
$R$-modules. 
Since 
$\rank F_{\ast} R = 4$, we see that 
$F_{\ast} R$ is a direct sum of 
indecomposable reflexive 
$R$-modules of rank $2$ corresponding to 
the solid circles in the figure below: 
$$\xygraph{
    \bullet  - [r]
    \circ    - [r]
    \circ (
        - [u] \circ ,
        - [r] \circ
        - [r] \circ
        - [r] \circ
        - [r] \bullet
)}$$
Thus $\FB_1 (X)$ coincides with the normal surface obtained by contracting the 
exceptional curves on the minimal resolution $\widetilde{X}$ corresponding to the 
blank circles in the above figure. 
\end{exa}

\section{F-blowups of simple elliptic singularities}

In this section $(X,x)$ will denote a simple elliptic singularity defined 
over an algebraically closed field $k$ of characteristic $p>0$ unless otherwise 
noted. Then by a result of Hirokado \cite{Hi}, $(X,x)$ is quasi-homogeneous. 
So we may assume that $X=\Spec R$ for a graded $k$-algebra 
$$R=R(E,L)=\bigoplus_{n\ge 0} H^0(E,L^n)t^n,$$ 
where $E$ is an elliptic curve over $k$, $L$ is an ample line bundle on 
$E$ and $\deg t=1$. The minimal resolution $f \colon \widetilde{X} \to X$ 
of $X$ is described as follows: $\widetilde X$ has an $\mathbb{A}^1$-bundle 
structure $\pi\colon \widetilde{X}=\mathrm{Spec}_E(L^nt^n) \to E$ over $E$, 
and its zero-section, which we also denote by $E$, is the exceptional curve 
of $f$. Its self-intersection number is $E^2=-\deg L$. Our situation is 
summarized in the following diagram: 
$$\xymatrix
{E\ \ar@<-0.8mm>@{^{(}->}[r] \ar[dr]_{\mathrm{id}} 
 & \widetilde{X} \ar[r]^f\ar[d]^{\pi} & X      \\ 
 &      E                             &  }
$$
To compute the F-blowup $\FB_e(X)$ of $X$, we will look at the structure 
of the torsion-free pullback $f^{\star}R^{1/q}$ of $R^{1/q}\cong F_*^e\cO_X$, 
where $q=p^e$. For this purpose we decompose 
$R^{1/q}=\bigoplus_{n\ge 0} H^0(E,F^e_*L^n)t^{n/q}$ as 
$R^{1/q} = \bigoplus_{i=0}^{q-1}\, [R^{1/q}]_{i/q\!\!\mod\Z}$, where 
$$[R^{1/q}]_{i/q\!\!\!\mod\Z}
  =\bigoplus_{0\le n\equiv i\!\!\!\mod q} H^0(E,F^e_*L^n)t^{n/q}
  \cong \bigoplus_{m\ge 0} H^0(E,L^m\otimes F^e_*L^i)$$ 
is an $R$-summand of $R^{1/q}$ for $i=0,1,\dots ,q-1$; cf.\ \cite{SVdB}. 

In what follows we put $q=p^e$ and $d=\deg L=-E^2$. 

\begin{lmm}\label{lem1}
If $1\le i\le q-1$ and $q\ne di$, then 
$\widetilde X$ is a flattening of $[R^{1/q}]_{i/q\!\!\mod\Z}$. 
\end{lmm}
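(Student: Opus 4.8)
The plan is to compute the torsion-free pullback $f^{\star}M_i$ of the summand $M_i:=[R^{1/q}]_{i/q\!\!\mod\Z}$ explicitly, using the graded description together with the vector bundle $\mathcal{E}_i:=F^e_*L^i$ on $E$, and to show that it is locally free precisely because $q\ne di$. First I would translate the given formula $M_i\cong\bigoplus_{m\ge 0}H^0(E,L^m\otimes\mathcal{E}_i)$ into sheaf theory on $\widetilde X$. Since $\widetilde X=\mathrm{Spec}_E(\bigoplus_{m\ge 0}L^mt^m)$ and $\pi_*\cO_{\widetilde X}=\bigoplus_{m\ge 0}L^m$, the projection formula gives $\Gamma(\widetilde X,\pi^*\mathcal{E}_i)=\bigoplus_{m\ge 0}H^0(E,\mathcal{E}_i\otimes L^m)=M_i$. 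Thus $M_i$ is the module of global sections of the locally free sheaf $\pi^*\mathcal{E}_i$ of rank $q$, and $f_*\pi^*\mathcal{E}_i$ corresponds to $M_i$ on the affine $X$. As explained in the blowup-at-modules setup, $f^{\star}M_i$ is then exactly the $\cO_{\widetilde X}$-subsheaf $\mathcal S\subseteq\pi^*\mathcal{E}_i$ generated by the sections $M_i$: the adjunction map $f^*M_i\to\pi^*\mathcal{E}_i$ is an isomorphism over the locus where $f$ is an isomorphism, so its kernel is exactly the torsion and its image is $f^*M_i/\tors=f^{\star}M_i$. The whole problem becomes: decide when $\mathcal S$ is locally free.

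Next I would localize along the exceptional curve $E\subset\widetilde X$, i.e.\ the zero-section $\{\ell=0\}$ where $\ell$ is the fibre coordinate. A section arising from $H^0(E,\mathcal{E}_i\otimes L^m)$ contributes in $\ell$-degree $m$, so the degree-$k$ part of $\mathcal S$ is $\sum_{m\le k}W_m$, where $W_m\subseteq\mathcal{E}_i$ is the image of the evaluation map $H^0(\mathcal{E}_i\otimes L^m)\otimes\cO_E\to\mathcal{E}_i\otimes L^m$ twisted back by $L^{-m}$. The key numerical input, obtained from $\chi(\mathcal{E}_i)=\chi(L^i)$ (Frobenius is finite) and Riemann--Roch on the genus-one curve $E$, is $\deg\mathcal{E}_i=di$ and $\rank\mathcal{E}_i=q$, so $\mu(\mathcal{E}_i)=di/q$.

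For $m\ge 1$ the bundle $\mathcal{E}_i\otimes L^m$ has slope $di/q+md>1$; being semistable of slope $>1$ on an elliptic curve it is globally generated (from $0\to\mathcal{E}_i\otimes L^m(-y)\to\mathcal{E}_i\otimes L^m\to\mathcal{E}_i\otimes L^m|_y\to 0$ one gets $H^1(\mathcal{E}_i\otimes L^m(-y))=H^0((\mathcal{E}_i\otimes L^m)^{\vee}(y))=0$), so $W_m=\mathcal{E}_i$. Hence $\mathcal S=W_0+\ell\cdot\pi^*\mathcal{E}_i$, and a local splitting argument at each point of $E$ shows that $\mathcal S$ is locally free if and only if $W_0$ is a \emph{subbundle} of $\mathcal{E}_i$, that is, iff $\mathrm{coker}(H^0(\mathcal{E}_i)\otimes\cO_E\to\mathcal{E}_i)$ is torsion-free.

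The main obstacle, and the only place where the hypothesis $q\ne di$ enters, is this last point about the evaluation map of $\mathcal{E}_i$; I would run it through the semistability of the Frobenius push-forward $\mathcal{E}_i=F^e_*L^i$, appealing to the structure theory of vector bundles on elliptic curves (semistability of Frobenius direct images). When $\mu(\mathcal{E}_i)=di/q>1$, $\mathcal{E}_i$ is already globally generated and $W_0=\mathcal{E}_i$. When $\mu(\mathcal{E}_i)<1$ one has $h^1(\mathcal{E}_i)=0$ and $h^0(\mathcal{E}_i)=di$, and semistability forces $H^0(\mathcal{E}_i)\otimes\cO_E\to\mathcal{E}_i$ to be an injection with locally free cokernel, so $W_0$ is again a subbundle. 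The borderline slope $\mu=1$, which is exactly $q=di$, is precisely where $h^0(\mathcal{E}_i)=\rank\mathcal{E}_i=q$ and the square map $\cO_E^{\,q}\to\mathcal{E}_i$ degenerates along its degree-$q$ determinantal divisor, introducing torsion in the cokernel; excluding this case is what makes $W_0$ a subbundle, whence $f^{\star}M_i=\mathcal S$ is locally free and $\widetilde X$ flattens $M_i$. I expect the delicate point to be verifying the subbundle property of $W_0$ in the range $\mu<1$, and this is the structural reason the lemma carries the blanket hypothesis $q\ne di$.
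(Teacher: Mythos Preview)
Your overall strategy matches the paper's: both reduce the flatness of $f^{\star}M_i$ to the question of whether the degree-zero evaluation map $\alpha_0\colon H^0(E,\mathcal{E}_i)\otimes\cO_E\to\mathcal{E}_i$ has locally free cokernel, after first checking that $\mathcal{E}_i\otimes L^m$ is globally generated for all $m\ge 1$. The difference lies in how this cokernel is handled in the range $q>di$. You invoke the semistability of $\mathcal{E}_i=F^e_*L^i$ as a black box to conclude that $\mathcal{E}_i(-P)$ is semistable of negative slope, hence has no sections, so $\alpha_0$ is fiberwise injective at every $P\in E$. The paper obtains exactly the same vanishing without semistability, directly from the projection formula: $H^0(\cO_E(-P)\otimes F^e_*L^i)=H^0(L^i(-qP))=0$ because $\deg(L^i(-qP))=di-q<0$. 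So the paper's route is more elementary and self-contained, while yours is conceptually clean but imports a nontrivial theorem; in particular, the ``delicate point'' you flag (the subbundle property of $W_0$ when $\mu<1$) is immediate once this vanishing is known, by either method. Finally, where you use a local splitting to pass from ``$W_0$ is a subbundle'' to ``$\mathcal{S}$ is locally free'', the paper phrases the same step as a depth count on the short exact sequence $0\to f^{\star}M_i\to\pi^*\mathcal{E}_i\to\Coker(\alpha_0)\to 0$.
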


\begin{proof}
First of all, the locally free sheaf $L^m\otimes F^e_*L^i$ on $E$ is 
generated by its global sections if $m\ge 1$, or $m=0$ and $q<di$. 
To see this, let $P\in E$ and consider the exact sequence 
\begin{eqnarray} &
0\to L^m(-P)\otimes F^e_*L^i \to L^m\otimes F^e_*L^i 
                               \to \kappa(P)\otimes L^m\otimes F^e_*L^i \to 0.
\end{eqnarray}
Since 
$h^1(L^m(-P)\otimes F^e_*L^i)=h^1(L^{qm+i}(-qP))=h^0(L^{-qm-i}(qP))=0$
by the assumption, the induced map $H^0(E,L^m\otimes F^e_*L^i) \to 
H^0(E,\kappa(P)\otimes L^m\otimes F^e_*L^i)$ is surjective, i.e., 
$L^m\otimes F^e_*L$ is generated by its global sections at $P\in E$. 
Hence 
\begin{eqnarray*}
f^{\star}[R^{1/q}]_{i/q\!\!\!\mod\Z} 
 & = & \Im([R^{1/q}]_{i/q\!\!\!\mod\Z}\otimes_R\cO_{\widetilde X}
                                      \to F^e_*\cO_{\widetilde X})     \\
 & = & \Im\displaystyle{\left(
         \bigoplus_{m\ge 0} H^0(E,L^m\otimes F^e_*L^i)\otimes_k\cO_E 
         \stackrel{\alpha}{\lra} \bigoplus_{m\ge 0} L^m\otimes F^e_*L^i 
                                                             \right)} \\
 & = & \Im(H^0(E,F^e_*L^i)\otimes\cO_E\stackrel{\alpha_0}{\lra} F^e_*L^i)
                        \oplus \bigoplus_{m\ge 1} L^m\otimes F^e_*L^i \\
 & \subset & \bigoplus_{m\ge 0} L^m\otimes F^e_*L^i \cong \pi^*F^e_*L^i,
\end{eqnarray*}
where $\alpha_m$ ($m\ge 0$) is the graded part of the map $\alpha$ 
of degree $m$, and in particular, 
$f^{\star}[R^{1/q}]_{i/q\!\!\mod\Z} \cong \pi^*F^e_*L^i$ if $q<di$. 
Since $\pi^*F^e_*L^i$ is a locally free $\cO_{\widetilde X}$-module, 
we consider the case $q>di$. Since $\alpha_m$ is surjective for $m\ge 1$, the 
$\cO_{\widetilde X}$-module $\Coker(\alpha)=\Coker(\alpha_0)$ is regarded 
as a coherent sheaf on the exceptional curve $E\subset\widetilde{X}$ of $f$. 
Then we will see: 

\medskip\noindent\textbf{Claim.}
$\Coker(\alpha)=\Coker(\alpha_0)$ is a locally free sheaf on $E$, so 
that it has depth 1 as an $\cO_{\widetilde X}$-module at each point on 
$E\subset \widetilde{X}$. 

\medskip
To prove the claim, note that $h^0(F^e_*L^i)=h^0(L^i)=di$ by Riemann-Roch 
and that $F^e_*L^i$ is a locally free sheaf on $E$ of rank $q$, so that 
the rank of $\Coker(\alpha)=\Coker(\alpha_0)$ as an $\cO_E$-module is at 
least $q-di$. On the other hand, since $H^0(E,\cO_E(-P)\otimes F^e_*L^i) =
H^0(E,L^i(-qP))=0$ by our assumption, the cohomology long exact sequence 
of (3) for $m=0$ turns out to be 
$$0 \to H^0(E,F^e_*L^i) \to \kappa(P)\otimes F^e_*L^i 
                        \to H^1(E,\cO_E(-P)\otimes F^e_*L^i) \to 0,$$ 
from which we see that the minimal number of local generators of 
$\Coker(\alpha)$ is $\dim\Coker(\alpha_0)\otimes\kappa(P)=q-di$. 
Comparing the rank and the minimal number of local generators, we conclude 
that $\Coker(\alpha)=\Coker(\alpha_0)$ is a locally free sheaf on $E$ of 
rank $q-di$. 

Now we have an exact sequence of $\cO_{\widetilde X}$-modules 
$$
0\to f^{\star}[R^{1/q}]_{i/q\!\!\!\mod\Z} \to \pi^*F^e_*L^i 
                                          \to\Coker(\alpha)\to 0,$$
in which $\pi^*F^e_*L^i$ and $\Coker(\alpha)$ have depth 2 and 1, 
respectively. Thus the depth of $f^{\star}[R^{1/q}]_{i/q\!\!\mod\Z}$
is 2, so that it is locally free on $\widetilde X$. 
\end{proof}

\begin{rem}
In the case where $1\le i\le q-1$ and $q=di$, a similar argument as 
in the proof of Lemma \ref{lem1} shows that 
$f^{\star}[R^{1/q}]_{i/q\!\!\mod\Z}$ is \emph{not} flat at 
$P\in E\subset \widetilde{X}$ if and only if $L^i\cong \cO_E(qP)$. 
\end{rem}

\begin{crl}\label{ellip0}
If $q=p^e>1$ and $d=-E^2$ is not a power of the characteristic $p$, 
then $\widetilde X$ is the normalization of the blowup $\Bl_{N_q}(X)$ 
of $X=\Spec R$ at the $R$-module 
$N_q=\bigoplus_{i=1}^{q-1} [R^{1/q}]_{i/q\!\!\mod\Z}$. 
\end{crl}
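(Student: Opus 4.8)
The plan is to exhibit the minimal resolution $f\colon\widetilde X\to X$ as a flattening of $N_q$, factor $f$ through $\Bl_{N_q}(X)$, and then show that the resulting morphism is finite. Write $M_i:=[R^{1/q}]_{i/q\bmod\Z}$, so that $N_q=\bigoplus_{i=1}^{q-1}M_i$. First I would use the hypothesis: since $q=p^e$ and $d$ is not a power of $p$, the equality $q=di$ is impossible for $1\le i\le q-1$, for otherwise $d\mid p^e$ would force $d$ to be a power of $p$. Hence Lemma~\ref{lem1} applies to every index $i\in\{1,\dots,q-1\}$, so each $f^{\star}M_i$ is locally free. Because the torsion submodule of a finite direct sum is the direct sum of the torsion submodules, $f^{\star}$ respects the decomposition $N_q=\bigoplus_i M_i$; thus $f^{\star}N_q=\bigoplus_i f^{\star}M_i$ is locally free and $\widetilde X$ is a flattening of $N_q$. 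By the universal property of the universal flattening $\Bl_{N_q}(X)$, the map $f$ factors as $\widetilde X\xrightarrow{\,g\,}\Bl_{N_q}(X)\to X$, with $g$ proper ($f$ is proper and $\Bl_{N_q}(X)\to X$ is separated) and birational (both modifications agree with $X$ over the regular locus $X\setminus\{x\}$).

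It then remains to identify $\widetilde X$ with the normalization of $\Bl_{N_q}(X)$, for which I would show that $g$ is finite. Since $(X,x)$ is simple elliptic, the only exceptional curve of $f$ is the elliptic curve $E$, so $g$ can fail to be quasi-finite only by contracting $E$. Exactly as in the construction preceding Proposition~\ref{FBsurface}, the flattening $f$ of $N_q$ yields $g^{*}\cO_{\PP}(1)\cong\det f^{\star}N_q$, where $\cO_{\PP}(1)$ is the tautological bundle on $\Bl_{N_q}(X)\subset\PP^m_X$; hence if $g$ contracted $E$ to a point then $c_1(f^{\star}N_q)\cdot E$ would vanish. Therefore it suffices to prove $c_1(f^{\star}N_q)\cdot E>0$. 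Granting this, $g$ is quasi-finite and proper, hence finite, and birational; since $\widetilde X$ is smooth and therefore normal, $g_{*}\cO_{\widetilde X}$ is the integral closure of $\cO_{\Bl_{N_q}(X)}$ in the common function field, i.e.\ $\widetilde X$ is the normalization of $\Bl_{N_q}(X)$.

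The crux is the positivity $c_1(f^{\star}N_q)\cdot E=\sum_{i=1}^{q-1}c_1(f^{\star}M_i)\cdot E>0$, which I would read off summand by summand from the exact sequences produced in the proof of Lemma~\ref{lem1}. Two inputs are needed. First, $\deg F^e_*L^i=\deg L^i=di$: since $F^e$ is finite one has $H^{\bullet}(E,F^e_*L^i)=H^{\bullet}(E,L^i)$, so $\chi(F^e_*L^i)=\chi(L^i)$, and on the elliptic curve $E$ the rank term in Riemann--Roch drops out. Second, $\pi|_E=\mathrm{id}$ and $E^2=-d$, so $c_1(\pi^*F^e_*L^i)\cdot E=\deg F^e_*L^i=di$. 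For $q<di$ one has $f^{\star}M_i\cong\pi^*F^e_*L^i$, giving $c_1(f^{\star}M_i)\cdot E=di>0$. For $q>di$ the sequence $0\to f^{\star}M_i\to\pi^*F^e_*L^i\to\Coker(\alpha_i)\to0$ with $\Coker(\alpha_i)$ supported on $E$ of generic rank $q-di$ gives $c_1(\Coker(\alpha_i))=(q-di)[E]$, whence $c_1(f^{\star}M_i)\cdot E=di-(q-di)E^2=di+d(q-di)>0$. Summing over $i$ yields the claim.

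The step I expect to be the main obstacle is this intersection-number bookkeeping together with its passage to finiteness: one must be sure that $\det$ behaves multiplicatively across the direct sum and additively in the torsion extension, that the torsion sheaf $\Coker(\alpha_i)$ contributes exactly its generic rank times $[E]$ to $c_1$, and that the contraction criterion of Proposition~\ref{FBsurface}, stated there for $\cO_X^{1/p^e}$, transfers verbatim to $N_q$. The hypothesis that $d$ is not a power of $p$ enters precisely to exclude the borderline case $q=di$, where by the Remark following Lemma~\ref{lem1} flatness on $\widetilde X$ can genuinely fail and the above computation would break down.
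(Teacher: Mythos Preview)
Your argument is correct, and it takes a genuinely different route from the paper's. Both proofs begin identically: the hypothesis on $d$ rules out $q=di$, Lemma~\ref{lem1} applies to every summand, $\widetilde X$ is a flattening of $N_q$, and $f$ factors as $\widetilde X\xrightarrow{g}\Bl_{N_q}(X)\to X$. The divergence is in showing that $g$ does not contract $E$. The paper argues indirectly: it shows $N_q$ is \emph{not flat} as an $R$-module by bounding the free rank of $\cO_{X,x}^{1/q}$ (invoking results of Aberbach--Enescu and Sannai--Watanabe on Gorenstein F-pure rings), so $\Bl_{N_q}(X)\to X$ is a non-isomorphism to a normal surface and must therefore have an exceptional curve, namely $g(E)$. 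You instead compute $c_1(f^{\star}N_q)\cdot E$ directly from the exact sequences in the proof of Lemma~\ref{lem1}, obtaining a strictly positive number, and then invoke the contraction criterion. Your approach is more self-contained (no appeal to F-signature--type results) and yields quantitative information; the paper's is shorter and more conceptual once one accepts the cited input. Your intersection-number bookkeeping is sound: $c_1(\iota_*\mathcal G)=(\rank\mathcal G)[E]$ for a locally free $\mathcal G$ on $E$, and the transfer of Proposition~\ref{FBsurface} to $N_q$ is immediate from the discussion preceding it.
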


\begin{proof}
First we will see that $N_q$ is not flat if $q=p^e>1$. For, if $N_q$ is 
flat, then the $\cO_{X,x}$-module $\cO_{X,x}^{1/q}$ has a free summand 
of rank at least $q(q-1)$. However, the rank of the free summand of 
$\cO_{X,x}^{1/q}$ is exactly equal to $1$, since $\cO_{X,x}$ is a Gorenstein 
F-pure local ring with isolated non-F-regular locus; see Aberbach--Enescu 
\cite{AE} and Sannai--Watanabe \cite[Theorem 5.1]{SW}. 

Now by Lemma \ref{lem1}, the minimal resolution 
$f \colon \widetilde X \to X$ is a flattening of $N_q$, so it factors as 
$f\colon\widetilde{X}\stackrel{g}{\to}\Bl_{N_q}(X)\stackrel{h}{\to}X$. 
Since $N_q$ is not flat and $X$ is normal, $h$ is not an isomorphism 
and has an exceptional curve, which is equal to $g(E)$. Hence $g$ is 
finite (and birational), so that $\widetilde X$ is the normalization 
of $\Bl_{N_q}(X)$. 
\end{proof}

Next we consider the structure of $f^{\star}[R^{1/q}]_{0\!\!\mod\Z}$, 
which depends on whether $R$ is F-pure or not. This is equivalent to 
saying whether the elliptic curve $E$ is ordinary or supersingular, 
since the section ring $R=R(E,L)$ is F-pure if and only if $E=\Proj R$ 
is F-split. 

\subsection{The F-pure case.}
We first consider the case where $R$ is F-pure, or equivalently, 
$E$ is an ordinary elliptic curve. In this case, given a fixed point 
$P_0 \in E$ as the identity element of the group law of $E$, there 
are exactly $q=p^e$ distinct $q$-torsion points $P_0,\dots,P_{q-1}$. 
In other words, there are exactly $q$ non-isomorphic $q$-torsion 
line bundles $L_0,\dots,L_{q-1}\in\mathrm{Pic}^{\circ}(E)$ given 
by $L_i=\cO_E(P_i-P_0)$. Then $F^e_*\cO_E$ splits into line bundles 
as 
\begin{eqnarray}
F^e_*\cO_E \cong \bigoplus_{i=0}^{q-1} L_i.
\end{eqnarray}
Indeed, since $\cO_E$ is a direct summand of $F^e_*\cO_E$ by 
F-splitting, each $L_i$ is a direct summand of 
$L_i\otimes F^e_*\cO_E \cong F^e_*F^{e*} L_i 
                      \cong F^e_*(L_i^q) \cong F^e_*\cO_E$; 
cf.\ \cite{At}. 

\begin{lmm}\label{lem2}
Let $E$ be an ordinary elliptic curve. 
\begin{enumerate}
\renewcommand{\labelenumi}{(\arabic{enumi})}
\item Suppose that $d=1$ and choose the identity element $P_0\in E$ 
so that $L\cong\cO_E(P_0)$. Then $f^{\star}[R^{1/q}]_{0\!\!\mod\Z}$ 
is not flat exactly at the $q-1$ distinct $q$-torsion points 
$P_1,\dots,P_{q-1} \in E\subset\widetilde{X}$ other than $P_0$. 
Moreover, $[R^{1/q}]_{0\!\!\mod\Z}$ is flattened by blowing up 
the points $P_1,\dots,P_{q-1}$. 
\item If $d\ge 2$, then $\widetilde X$ is a flattening of 
$[R^{1/q}]_{0\!\!\mod\Z}$. 
\end{enumerate}
\end{lmm}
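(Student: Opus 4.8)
The plan is to extend the analysis of Lemma \ref{lem1} to the degree-zero summand $[R^{1/q}]_{0\mod\Z}$, exploiting the splitting $F^e_*\cO_E\cong\bigoplus_{j=0}^{q-1}L_j$ on the ordinary curve $E$. Writing $M_j:=\bigoplus_{m\ge 0}H^0(E,L^m\otimes L_j)$, this splitting gives a decomposition of $R$-modules $[R^{1/q}]_{0\mod\Z}\cong\bigoplus_{j=0}^{q-1}M_j$, and hence $f^\star[R^{1/q}]_{0\mod\Z}=\bigoplus_j f^\star M_j$. So it suffices to locate, for each $j$, the locus where the rank-one sheaf $f^\star M_j$ fails to be locally free, running the argument of Lemma \ref{lem1} with the line bundle $\cO_E$ there replaced by the degree-zero bundle $L_j$. (Note that $(X,x)$ is not rational, so the Artin--Verdier phenomenon fails and $f^\star$ of a reflexive module need not be locally free—which is exactly what makes the non-flat points possible.) Everything hinges on elementary global-generation facts on the genus-one curve $E$: a line bundle of degree $\ge 2$ is globally generated, one of degree $1$ is globally generated away from its single base point, and a nontrivial bundle of degree $0$ has no sections.

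First I would treat (2). If $d\ge 2$, then for every $j$ and every $m\ge 1$ the bundle $L^m\otimes L_j$ has degree $md\ge 2$, hence is globally generated, so the evaluation map $\alpha_m^{(j)}$ is surjective. Thus the only possibly nonzero graded piece of $\Coker(\alpha^{(j)})$ sits in degree $0$, where it equals $0$ (if $j=0$) or the degree-zero line bundle $L_j$ on $E$ (if $j\ne 0$); in either case it is locally free on $E$. The depth argument of Lemma \ref{lem1}, applied to
$$0\to f^\star M_j\to \pi^*L_j\to\Coker(\alpha^{(j)})\to 0,$$
then forces $f^\star M_j$ to have depth $2$, so it is locally free on all of $\widetilde X$; summing over $j$ shows that $\widetilde X$ flattens $[R^{1/q}]_{0\mod\Z}$.

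For (1), put $d=1$ and $L\cong\cO_E(P_0)$. The summand $M_0=R$ contributes $f^\star R=\cO_{\widetilde X}$, which is locally free everywhere. For $j\ne 0$ we have $L_j=\cO_E(P_j-P_0)$, so $L^m\otimes L_j=\cO_E\big(mP_0+(P_j-P_0)\big)$ has degree $m$: the pieces with $m\ge 2$ are globally generated, $H^0(L_j)=0$ in degree $0$, and in degree $1$ the bundle $\cO_E(P_j)$ has its single base point at $P_j$, so $\alpha_1^{(j)}$ has image $\cO_E\subsetneq\cO_E(P_j)$ with cokernel $\kappa(P_j)$. Comparing graded pieces, I would identify $f^\star M_j$ as a subsheaf of the line bundle $\pi^*L_j\otimes\cO_{\widetilde X}(-E)$—whose degree-$m$ piece is $L^m\otimes L_j$ for $m\ge 1$ and $0$ for $m=0$—the two agreeing in every degree except degree $1$. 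This yields an exact sequence
$$0\to f^\star M_j\to \pi^*L_j\otimes\cO_{\widetilde X}(-E)\to\kappa(P_j)\to 0,$$
so $f^\star M_j\cong\mathcal I_{P_j}\otimes\pi^*L_j\otimes\cO_{\widetilde X}(-E)$ is not locally free exactly at $P_j$. Summing over $j$, the module is locally free on $\widetilde X$ away from $\{P_1,\dots,P_{q-1}\}$ and fails to be locally free at each of these points. Finally, blowing up $\widetilde X$ at $P_1,\dots,P_{q-1}$ makes each $\mathcal I_{P_j}$ invertible, so the torsion-free pullback of every summand becomes locally free, proving that $[R^{1/q}]_{0\mod\Z}$ is flattened.

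The hard part will be the bookkeeping in (1): one must justify that the graded description really gives the degree-$1$ image as $\cO_E$—this rests on the vanishing of all lower graded pieces, so that the $R$-action produces no extra generators—and that the resulting quotient onto the reflexive hull is a single reduced skyscraper of length one. Once the identification $f^\star M_j\cong\mathcal I_{P_j}\otimes(\text{line bundle})$ is secured, the non-flatness at $P_j$ and the flattening after blowup are immediate, while case (2) is essentially a direct transcription of the depth argument in Lemma \ref{lem1}.
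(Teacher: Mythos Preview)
Your proposal is correct and follows essentially the same approach as the paper: both decompose $[R^{1/q}]_{0\bmod\Z}\cong\bigoplus_j J_j$ via the splitting $F^e_*\cO_E\cong\bigoplus L_j$, and for $d=1$, $j\ne 0$ both identify $f^\star J_j$ with $\I_{P_j}\otimes\pi^*L_j\otimes\cO_{\widetilde X}(-E)$ by analyzing the base locus of $|L^m\otimes L_j|$ degree by degree. The only cosmetic difference is in part (2): the paper embeds $f^\star J_j$ directly into $\cO_{\widetilde X}(-E)\otimes\pi^*L_j$ and observes the cokernel is zero, whereas you embed into $\pi^*L_j$ and invoke the depth argument of Lemma~\ref{lem1} on the line-bundle cokernel $L_j$ supported on $E$---both yield the same conclusion.
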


\begin{proof}
Corresponding to the splitting of $F^e_*\cO_E$ as in the formula (2) 
above, the $R$-module $[R^{1/q}]_{0\!\!\mod\Z}$ has a splitting 
$[R^{1/q}]_{0\!\!\mod\Z} \cong \bigoplus_{i=0}^{q-1} N_i$ into 
$q$ non-isomorphic reflexive $R$-modules $R=J_0,J_1,\dots,J_{q-1}$ 
of rank 1, where 
$$J_i=\Gamma_*(L_i):=\bigoplus_{m\in\Z} H^0(E,L_i\otimes L^m)
                    =\bigoplus_{m\ge 0} H^0(E,L_i\otimes L^m).$$ 

In case (1) where $d=1$, it is sufficient to show the following 

\medskip\noindent\textbf{Claim.}
For $i=1,\dots,q-1$, $f^{\star}J_i$ is not flat 
exactly at the single point $P_i \in E\subset \widetilde{X}$. 
If $\sigma_i\colon \widetilde{X}_i \to \widetilde{X}$ is the blowup 
at $P_i$, then $(f\circ\sigma_i)^{\star}J_i$ is invertible. 

\medskip
To prove the claim, note that $\deg L=1$ and $\deg L_i=0$. Then the 
following holds for the linear system $|L_i\otimes L^m|$ on $E$: 
$|L_i|=\emptyset$, $|L_i\otimes L|=$ Bs$|L_i\otimes L|=\{P_i\}$ and 
$|L_i\otimes L^m|$ is base point free for $m\ge 2$. Hence, as in the 
proof of the previous lemma, 
\begin{eqnarray*}
f^{\star}J_i 
  & = & \Im(J_i\otimes_R\cO_{\widetilde X}\to F^e_*\cO_{\widetilde X}) \\
  & = & \Im\displaystyle{\left(
           \bigoplus_{m\ge 0} H^0(E,L_i\otimes L^m)\otimes_k\cO_E
           \to \bigoplus_{m\ge 0}L_i\otimes L^m\right)}              \\
  & = & L_i\otimes L(-P_i) \oplus \bigoplus_{m\ge 2} L_i\otimes L^m 
  \subset \bigoplus_{m\ge 1} L_i\otimes L^m 
  \cong \cO_{\widetilde X}(-E) \otimes \pi^*L_i, 
\end{eqnarray*}
where $L_i\otimes L(-P_i) \cong \cO_E \subset L_i\otimes L$ is the graded 
part of degree $m=1$. We therefore have the following exact sequence of 
$\cO_{\widetilde X}$-modules
$$
0\to f^{\star}J_i \to 
     \cO_{\widetilde X}(-E)\otimes\pi^*L_i \to \kappa(P_i)\to 0,$$ 
which tells us that 
$f^{\star}J_i=\I_{P_i}\cdot\cO_{\widetilde X}(-E)\otimes\pi^*L_i$,
where $\I_{P_i}$ is the ideal sheaf defining the closed point 
$P_i\in\widetilde{X}$. Now the claim follows immediately. 

(2) If $\deg L\ge 2$, then the same argument as in (1) shows that 
$f^{\star}J_i$ is isomorphic to $\cO_{\widetilde X}(-E)\otimes\pi^*L_i$, 
which is invertible. 
\end{proof}

\if
We also need the following general lemma. 

\begin{lmm}\label{trivial}
Let $Y$ be a smooth surface, $C$ a complete curve on $Y$ and let 
$\mathcal F$ be a globally generated locally free sheaf of rank 
$r$ on $Y$ such that $c_1(\mathcal{F})C=0$. Then there exists an 
open neighborhood $U \subset Y$ of $C$ such that 
$\mathcal{F}|_U \cong \cO_U^{\oplus r}$. 
\end{lmm}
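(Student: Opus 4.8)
The plan is to realize $\mathcal{F}$ as the pullback of the universal quotient bundle on a Grassmannian and to show that $C$ is contracted to a point by the associated classifying morphism. Choosing a finite-dimensional subspace $V\subset H^0(Y,\mathcal{F})$ that generates $\mathcal{F}$, the surjection $V\otimes_k\cO_Y\twoheadrightarrow\mathcal{F}$ determines a morphism $\phi\colon Y\to G$ to the Grassmannian $G=\mathrm{Gr}(r,V)$ of rank-$r$ quotients of $V$, together with an isomorphism $\mathcal{F}\cong\phi^*Q$, where $Q$ is the universal rank-$r$ quotient bundle. Since $c_1(Q)=\det Q=\cO_G(1)$ is the (ample) Pl\"ucker polarization and $c_1(\mathcal{F})=\phi^*c_1(Q)$, the projection formula gives $0=c_1(\mathcal{F})\cdot C=c_1(Q)\cdot\phi_*[C]$.

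First I would use this vanishing to conclude that $\phi(C)$ is a single point. Indeed, $\phi_*[C]$ is either zero (when $\dim\phi(C)=0$) or a positive multiple of the class of the image curve (when $\dim\phi(C)=1$); in the latter case $c_1(Q)\cdot\phi_*[C]>0$ because $\cO_G(1)$ is ample and the image is a genuine curve. Hence the displayed equality forces $\phi$ to contract $C$ to a point $\mathrm{pt}\in G$.

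The main obstacle is that contracting $C$ only shows immediately that the restriction $\mathcal{F}|_C\cong\phi^*Q|_C$ is trivial, being the pullback of $Q$ under the constant map $C\to\{\mathrm{pt}\}$, whereas the lemma asks for triviality on a whole neighborhood. The Grassmannian viewpoint resolves this cleanly: since $Q$ is locally free, there is a Zariski-open neighborhood $W\subset G$ of $\mathrm{pt}$ on which $Q|_W\cong\cO_W^{\oplus r}$. Setting $U:=\phi^{-1}(W)$, which is open and contains $C$ because $\phi(C)=\{\mathrm{pt}\}\subset W$, I obtain $\mathcal{F}|_U\cong\phi^*(Q|_W)\cong\cO_U^{\oplus r}$, as desired.

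I would then only need to verify the standard facts invoked, namely that a globally generated sheaf admits such a classifying map with $\mathcal{F}\cong\phi^*Q$, that $c_1(Q)$ is the ample Pl\"ucker class, and that the projection formula applies to the proper morphism $\phi$ over the complete curve $C$, all of which are routine. I note that this argument imposes no reducedness or irreducibility hypothesis on $C$, so it covers in particular the case $C=E$ of the elliptic exceptional curve used later.
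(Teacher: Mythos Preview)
Your proof is correct and conceptually clean, but it takes a different route from the paper's argument. The paper proceeds more directly: it fixes a point $P\in C$, chooses $r$ global sections $s_1,\dots,s_r$ of $\mathcal{F}$ that generate $\mathcal{F}$ at $P$, and considers the resulting map $s\colon\cO_Y^{\oplus r}\to\mathcal{F}$. Restricting to $C$ gives $\bar s\colon\cO_C^{\oplus r}\to\mathcal{F}|_C$; if this were not surjective, the induced determinant map $\cO_C\hookrightarrow\det\mathcal{F}|_C$ would be a nonzero non-isomorphism, forcing $c_1(\mathcal{F})\cdot C>0$. Hence $\bar s$ is surjective, and by Nakayama $s$ is surjective (hence an isomorphism) on an open neighborhood $U$ of $C$.

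The two arguments are really the same idea viewed from opposite ends. Your choice of a trivializing neighborhood $W\ni\mathrm{pt}$ in the Grassmannian amounts precisely to choosing $r$ sections of $Q$ that generate at $\mathrm{pt}=\phi(P)$, i.e.\ the paper's $s_1,\dots,s_r$; and your contraction statement ``$\phi(C)=\{\mathrm{pt}\}$'' is equivalent to the paper's ``$\bar s$ is surjective along $C$''. What your Grassmannian packaging buys is a cleaner handling of possibly reducible or nonreduced $C$ via the projection formula and ampleness of $\cO_G(1)$, and a transparent explanation of why triviality extends to a \emph{neighborhood} rather than just holding on $C$. The paper's approach, on the other hand, is more elementary in that it avoids the Grassmannian machinery entirely and stays within the language of sections and Nakayama's lemma.
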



\begin{proof}
Fix any point $P\in C$ and choose global sections $s_1,\dots,s_r\in
H^0(Y,\mathcal{F})$ that generate $\mathcal F$ at $P$. We consider 
the induced monomorphisms $s\colon \cO_Y^{\oplus r}\to \mathcal{F}$ 
and $\bar{s} \colon \cO_C^{\oplus r} \to \mathcal{F}\otimes\cO_C$ 
sitting in the following commutative diagram. 
$$\begin{array}{ccc}
  \cO_Y^{\oplus r} &    \stackrel{s}{\lra}    & \mathcal{F} \\
    \downarrow    &                          & \downarrow  \\
  \cO_C^{\oplus r} & \stackrel{\bar{s}}{\lra} & \mathcal{F}\otimes\cO_C
\end{array}
$$
If the map $\bar{s}$ is not surjective, then its determinant map 
$\det\bar{s} \colon \cO_C \hookrightarrow \det\mathcal{F}\otimes\cO_C$ 
is not surjective, so that $c_1(\mathcal{F})C>0$, a contradiction. 
Thus the map $\bar{s}$ is surjective, and so is the map $s$ at any 
point of $C$ by Nakayama's lemma. Hence there exists an open neighborhood 
$U$ of $C$ such that $s|_U \colon \cO_U^{\oplus r} \to \mathcal{F}|_U$ 
is an isomorphism. 
\end{proof}
\fi

We now state a structure theorem for F-blowups of F-pure 
$\widetilde{E}_8$-singularities, that is, F-pure simple elliptic 
singularities with $E^2=-1$. 

\begin{thm}\label{ellip1}
Let $(X,x)$ be an F-pure simple elliptic singularity with the elliptic 
exceptional curve $E$ on the minimal resolution $\widetilde X$ such 
that $E^2=-1$. Let $P_0,\dots,P_{q-1}\in E$ be the $q=p^e$ distinct 
$q$-torsion points on $E\subset\widetilde{X}$, where the identity 
element $P_0$ is chosen so that $\cO_{\widetilde X}(-E)\otimes\cO_E 
\cong\cO_E(P_0)$, and let $Z=\{P_1,\dots,P_{q-1}\}\subset\widetilde{X}$. 
Then for any $e\ge 1$, the normalization of the $e$-th F-blowup 
$\FB_e(X)$ coincides with the blowup $\Bl_Z(\widetilde{X})$ of 
$\widetilde X$ at the $q$-torsion points other than $P_0$. 
\par
In particular, the $e$-th F-blowup of $X$ is not dominated by the 
minimal resolution of the singularity $(X,x)$, and the monotonic 
sequence of F-blowups {\rm (}see \cite{Y3}{\rm )}
$$\cdots\to \FB_e(X) \to\cdots\to \FB_2(X) \to \FB_1(X) \to X$$ 
does not stabilize.
\end{thm}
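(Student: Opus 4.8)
The plan is to produce $W:=\Bl_Z(\widetilde X)$ as a common flattening of all graded summands of $R^{1/q}$, and then to verify by an intersection computation that the induced morphism $W\to\FB_e(X)$ contracts no curve, so that $W$ is exactly the normalization.

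Since $E^2=-1$ we have $d=1$, and I would use the decomposition $R^{1/q}=\bigoplus_{i=0}^{q-1}M_i$ with $M_i:=[R^{1/q}]_{i/q\bmod\Z}$. For $1\le i\le q-1$ we have $q>di=i$, so Lemma \ref{lem1} makes $f^{\star}M_i$ already locally free on the minimal resolution $\widetilde X$. For $i=0$, Lemma \ref{lem2}(1) gives $M_0\cong\bigoplus_{i=0}^{q-1}J_i$ and shows that $M_0$ is flattened exactly by blowing up $P_1,\dots,P_{q-1}$, i.e.\ on $W$. Writing $\mu\colon W\to\widetilde X$ for this point blowup and $\beta=f\circ\mu$, and using that torsion-free pullback commutes with finite direct sums, I would conclude that $\beta^{\star}R^{1/q}=\bigoplus_i\beta^{\star}M_i$ is locally free; thus $W$ is a flattening of $R^{1/q}$, and the universal property of $\FB_e(X)$ yields a projective birational morphism $g\colon W\to\FB_e(X)$.

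Since $W$ is smooth (a blowup of $\widetilde X$ at distinct points) and $g$ is birational, it suffices to show $g$ is finite; a finite birational morphism with normal source is the normalization of its target. As $W\to X$ and $\FB_e(X)\to X$ are both isomorphisms over $X\setminus\{x\}$, every curve possibly contracted by $g$ lies in $\beta^{-1}(x)=E'\cup C_1\cup\dots\cup C_{q-1}$, where $E'$ is the strict transform of $E$ and $C_j$ is the $\mu$-exceptional curve over $P_j$. By Proposition \ref{FBsurface} applied to the resolution $\beta\colon W\to X$, finiteness of $g$ reduces to the inequalities $c_1(\beta^{\star}R^{1/q})\cdot F>0$ for every such $F$. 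For $F=C_j$: the summands $\beta^{\star}M_i$ $(i\ge1)$ and $\beta^{\star}J_i$ $(i\ne j)$ meet $C_j$ in degree $0$ by the projection formula, while $f^{\star}J_j=\I_{P_j}\cdot\cO_{\widetilde X}(-E)\otimes\pi^{*}L_j$ (Lemma \ref{lem2}) forces $\beta^{\star}J_j$ to carry the factor $\cO_W(-C_j)$, contributing $-C_j^2=1$; hence $c_1(\beta^{\star}R^{1/q})\cdot C_j=1$. For $F=E'$: each $\beta^{\star}J_i$ contributes $0$ (a short computation using $\deg L_i=0$ and $E^2=-1$), whereas from the sequence $0\to f^{\star}M_i\to\pi^{*}F^e_*L^i\to\Coker(\alpha)\to0$ of Lemma \ref{lem1}, with $\Coker(\alpha)$ locally free of rank $q-i$ on $E$, together with $\deg F^e_*L^i=\deg L^i=i$, one finds $c_1(f^{\star}M_i)\cdot E=i-(q-i)E^2=q$, whence $c_1(\beta^{\star}M_i)\cdot E'=q$ by the projection formula; summing over $i=1,\dots,q-1$ gives $c_1(\beta^{\star}R^{1/q})\cdot E'=(q-1)q>0$. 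Thus $g$ contracts nothing, and $W$ is the normalization of $\FB_e(X)$.

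The main obstacle is this last Chern-class bookkeeping on $W$, and in particular the positivity $c_1(f^{\star}M_i)\cdot E=q>0$: without the summands $M_i$ $(i\ge1)$ the curve $E'$ would be contracted, so the whole statement hinges on correctly identifying the rank-$(q-i)$ cokernel sheaf on $E$ and its contribution $(q-i)[E]$ to $c_1$, and on tracking how $\I_{P_j}$ becomes $\cO_W(-C_j)$ after blowing up. For the final assertions: the normalization $W$ blows up the $q-1\ge1$ points of $Z$, hence strictly dominates $\widetilde X$; were $\widetilde X$ to dominate $\FB_e(X)$, lifting to the normalization would give a morphism $\widetilde X\to W$ over $X$ that is a section of $\mu$, which (having $2$-dimensional image in the irreducible surface $W$) would force $\mu$ to be an isomorphism — impossible since $q-1\ge1$. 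Finally $|Z|=p^e-1$, so the normalizations $\Bl_Z(\widetilde X)$ carry $p^e-1$ exceptional $(-1)$-curves, a number strictly increasing in $e$; hence the $\FB_e(X)$ are pairwise non-isomorphic and the monotone sequence cannot stabilize.
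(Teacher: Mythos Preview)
Your argument is correct, but it follows a different route from the paper's.

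The paper avoids all Chern-class bookkeeping by a sandwiching argument: since $N_q=\bigoplus_{i=1}^{q-1}M_i$ is an $R$-summand of $R^{1/q}$, there is a morphism $\FB_e(X)\to\Bl_{N_q}(X)$, and Corollary~\ref{ellip0} identifies the normalization of $\Bl_{N_q}(X)$ with $\widetilde X$; this produces a morphism $\varphi\colon\widetilde{\FB}_e(X)\to\widetilde X$. One then factors the point blowup $\mu\colon W=\Bl_Z(\widetilde X)\to\widetilde X$ as $\varphi\circ\psi$ with $\psi\colon W\to\widetilde{\FB}_e(X)$. Because $f^{\star}R^{1/q}$ fails to be flat exactly at $Z$, the map $\varphi$ already has an exceptional curve over each $P_i$, so the exceptional set of $\mu$ is entirely accounted for by $\varphi$; hence $\psi$ is finite birational onto a normal target, i.e.\ an isomorphism. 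No intersection numbers are computed.

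Your approach instead proves finiteness of $g\colon W\to\FB_e(X)$ directly via Proposition~\ref{FBsurface}, by checking $c_1(\beta^{\star}R^{1/q})\cdot F>0$ for $F=E'$ and $F=C_j$. The computations are right: for $C_j$ only the summand $J_j$ contributes, via $\mu^{-1}\I_{P_j}\cdot\cO_W=\cO_W(-C_j)$; for $E'$ the $J_i$ contribute $0$ (the $-C_i\cdot E'=-1$ cancels against $\mu^*(-E)\cdot E'=-E^2=1$), and each $M_i$ contributes $c_1(\pi^*F^e_*L^i)\cdot E-(q-i)E^2=i+(q-i)=q$ from the exact sequence in the proof of Lemma~\ref{lem1}. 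Your method is more hands-on and makes the positivity on $E'$ quantitatively explicit, at the cost of invoking the internal structure of that proof (the rank of $\Coker(\alpha_0)$); the paper's sandwiching is shorter and uses only the statements of Lemmas~\ref{lem1} and~\ref{lem2} together with Corollary~\ref{ellip0}.
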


\begin{proof}
Since $N_q=\bigoplus_{i=1}^{q-1}[R^{1/q}]_{i/q\!\!\mod\Z}$ is a direct 
summand of $R^{1/q}$ as an $R$-module, we have a morphism 
$\FB_e(X) \to \Bl_{N_q}(X)$ over $X$. If we denote the normalization 
of $\FB_e(X)$ by $\widetilde{\FB}_e(X)$, then we have a morphism 
$\varphi \colon \widetilde{\FB}_e(X) \to \widetilde{X}$ by Corollary 
\ref{ellip0}. On the other hand, since $\Bl_Z(\widetilde{X})$ is a flattening 
of $R^{1/q}$ by Lemmas \ref{lem1} and \ref{lem2}, we have a morphism 
$\Bl_Z(\widetilde{X}) \to \FB_e(X)$ over $X$, which induces  
$\psi \colon \Bl_Z(\widetilde{X}) \to \widetilde{\FB}_e(X)$. 
Thus the blowup $\pi \colon \Bl_Z(\widetilde{X}) \to \widetilde{X}$ 
at $Z \subset \widetilde{X}$ factors as
$$
\pi=\varphi\circ\psi \colon \Bl_Z(\widetilde{X}) \stackrel{\psi}{\lra}
                      \widetilde{\FB}_e(X) \stackrel{\varphi}{\lra} \widetilde{X}.
$$
Since $f^{\star}R^{1/q}$ is not flat exactly at $Z=\{P_1,\dots,P_{q-1}\}$ 
by Lemma \ref{lem2}, $\varphi$ has an exceptional curve over every 
$P_i$ and $\psi$ is finite (and birational), by the same argument as in 
the proof of Corollary \ref{ellip0}. Since $\widetilde{\FB}_e(X)$ is normal, 
$\psi$ is an isomorphism, that is, 
$\Bl_Z(\widetilde{X}) \cong \widetilde{\FB}_e(X)$ as required.
\end{proof}

The above theorem has nothing to say about the normality of the F-blowups. 
Let's take a look at Macaulay2 computation. 

\begin{exa}
\textrm{F}rom \cite[Cor.\ 4.3]{Hi}, the variety
\[
X = \Spec \FF_{2}[x,y,z]/(y^{2}+x^{3}+xyz+z^{6}) 
\]
has a simple elliptic singularity of type $\tilde E_{8}$. 
Moreover from Fedder's criterion \cite{F}, this is F-pure.
Note that since F-blowups are compatible with extensions of perfect fields \cite{Y1},
no problem occurs by that the base field is not algebraically closed.
By Macaulay2 computation, one can check the following: 
The first F-blowup  $\FB_{1}(X)$ is non-normal and its exceptional set consists of
 two projective lines $E_{1}$ and $E_{2}$ which intersects transversally at one point.
The normalization  $\widetilde{\FB}_{1}(X)$ of $\FB_{1}(X)$ is smooth. 
 The inverse image of $E_{1}$ in
$\widetilde{\FB}_{1}(X)$ is a smooth elliptic curve, 
 which agrees with Theorem \ref{ellip1}.
In particular, this experimental result shows that the normalization in the theorem is really necessary.
\end{exa}


Next we consider the case where $E^2\le-2$. 

\begin{thm}\label{ellip2}
Let $(X,x)$ be an F-pure simple elliptic singularity with the elliptic 
exceptional curve $E$ on the minimal resolution $\widetilde X$ such 
that $E^2\le-2$. Assume further that $d=-E^2$ is not a power of the 
characteristic $p$. Then $\widetilde X$ is the normalization of the 
$e$-th F-blowup $\FB_e(X)$ for all $e\ge 1$. 

Moreover, if $E^2\le-3$, then $\widetilde{X}\cong\FB_e(X)$ for all 
$e\ge1$. 
\end{thm}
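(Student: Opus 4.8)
The plan is to show first that the minimal resolution $\widetilde X$ is already a flattening of $R^{1/q}$, and then to study the resulting finite birational morphism $g\colon\widetilde X\to\FB_e(X)$. Write $R^{1/q}=\bigoplus_{i=0}^{q-1}[R^{1/q}]_{i/q\!\!\mod\Z}$. Since $d=-E^2\ge2$, Lemma \ref{lem2}(2) shows that $\widetilde X$ flattens the summand $i=0$. For $1\le i\le q-1$ I would invoke Lemma \ref{lem1}, which applies as long as $q\ne di$; and here $q\ne di$ holds for every such $i$, because $q=di$ would force $d\mid p^e$, making $d$ a power of $p$ and contradicting the hypothesis. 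Hence every summand, and therefore $f^{\star}R^{1/q}$ itself, is locally free, so $\widetilde X$ is a flattening of $R^{1/q}\cong F^e_*\cO_X$ and $f$ factors as $\widetilde X\xrightarrow{\,g\,}\FB_e(X)\xrightarrow{\,h\,}X$.

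For the first assertion I would argue exactly as in the proof of Corollary \ref{ellip0}. The F-blowup $h$ is an isomorphism only over the regular locus of $X$, so it is not an isomorphism over the singular point $x$ and therefore carries an exceptional curve $C$ over $x$. Since $g^{-1}(C)\subseteq(h\circ g)^{-1}(x)=f^{-1}(x)=E$ and $g$ is surjective, we obtain $C\subseteq g(E)$, so $g(E)$ is a curve and $g$ does not contract $E$. As $E$ is the only curve $g$ could contract, $g$ is quasi-finite; being also proper and birational it is finite, and since $\widetilde X$ is normal it is the normalization of $\FB_e(X)$. This settles the statement for all $d\ge2$ not a power of $p$.

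For the second assertion, with $g$ already finite and birational it remains to prove that $\FB_e(X)$ is normal, equivalently that $g$ is a closed immersion, whence $\widetilde X\cong\FB_e(X)$ because $\widetilde X$ is smooth. Since $g$ is an isomorphism away from $E$, the question is local around $E$. Because flattening $R^{1/q}$ flattens each summand, $\FB_e(X)$ dominates the blowup $\Bl_{J_j}(X)$ at each rank-one summand $J_j=\Gamma_*(L_j)$ of $[R^{1/q}]_{0\!\!\mod\Z}$ with $1\le j\le q-1$; so it suffices that one such $\widetilde X\to\Bl_{J_j}(X)$ already separates points and tangent vectors along $E$, as then $g$ cannot glue points or fail to be an immersion there either. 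Here Lemma \ref{lem2} gives $f^{\star}J_j\cong\cO_{\widetilde X}(-E)\otimes\pi^*L_j$, whose restriction to $E$ has degree $-E^2=d$, since $L_j$ is a torsion bundle of degree $0$ and $\cO_{\widetilde X}(-E)|_E$ has degree $d$. On an elliptic curve a line bundle is very ample exactly when its degree is at least $3$, so for $d\ge3$ this degree-$d$ bundle embeds $E$, which is precisely where the hypothesis $E^2\le-3$ enters.

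The main obstacle I anticipate is upgrading this embedding of $E$ to a closed immersion of a whole neighborhood of $E$ in $\widetilde X$: one must verify the necessary $f$-global-generation of $f^{\star}J_j$ (so that the sections coming from $J_j$ really span the very ample system on $E$) and control $g$ in the direction transverse to $E$, so that $g$ is unramified and injective \emph{along} $E$ and not merely \emph{on} $E$. This transverse/separation analysis is also exactly what fails when $d=2$: a degree-$2$ line bundle on the elliptic curve $E$ gives only a two-to-one map to $\PP^1$, very ampleness breaks down, and $\FB_e(X)$ can be genuinely non-normal — which is why the theorem asserts an isomorphism only for $E^2\le-3$ and settles for normalization when $E^2=-2$.
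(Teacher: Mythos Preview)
Your first part (the normalization statement for $d\ge2$) is exactly the paper's argument: Lemmas \ref{lem1} and \ref{lem2}(2) give that $\widetilde X$ flattens $R^{1/q}$, and then the Corollary \ref{ellip0} style argument identifies $\widetilde X$ with the normalization of $\FB_e(X)$.

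For the second part your strategy diverges from the paper's, and the gap you yourself flag is genuine. You try to show directly that the finite birational $g\colon\widetilde X\to\FB_e(X)$ is a closed immersion by pushing it down to $\Bl_{J_j}(X)$ and using that $f^{\star}J_j|_E$ is very ample when $d\ge3$. But very ampleness on $E$ only controls the map \emph{along} $E$; the transverse direction and the global generation you worry about are real obstructions, and you have not indicated how to overcome them.

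The paper avoids this entirely by a purely algebraic argument. It also factors $f$ through $\Bl_I(X)$ with $I\cong J_j$, observes that $\widetilde X$ is the normalization of $\Bl_I(X)$, and then proves $\Bl_I(X)=\Proj R[It]$ is already normal by showing $I^m=\overline{I^m}$ for all $m\ge1$. Concretely, $I\cO_{\widetilde X}\cong\cO_{\widetilde X}(-E)\otimes\pi^*L_j$ is invertible, so $\overline{I^m}\cong\bigoplus_{n\ge m}H^0(E,L_j^m\otimes L^n)$; surjectivity of $I^{\otimes m}\to\overline{I^m}$ in each degree then reduces to surjectivity of
\[
H^0(E,L_j\otimes L)^{\otimes m}\otimes H^0(E,L)^{\otimes(n-m)}\longrightarrow H^0(E,L_j^m\otimes L^n),
\]
which the paper proves as a separate lemma (via Mumford's Castelnuovo--Mumford style argument) for line bundles of degree $\ge3$ on an elliptic curve. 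This is precisely where $d\ge3$ enters, in the same spirit as your very-ampleness remark, but packaged so that no transverse or infinitesimal analysis on $\widetilde X$ is needed: normality of the Rees algebra gives $\widetilde X\cong\Bl_I(X)$, and since $\FB_e(X)$ sits between $\widetilde X$ and $\Bl_I(X)$ over $X$, one gets $\widetilde X\cong\FB_e(X)$.
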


\begin{proof}
Since $\widetilde X$ is a flattening of $R^{1/q}$ by Lemmas \ref{lem1} 
and \ref{lem2}, we see that $\widetilde X$ is the normalization of 
$\FB_e(X)$ as in the proof of Corollary \ref{ellip0}. 

To deduce a stronger conclusion in the special case $E^2\le-3$, 
we need the following 

\begin{lmm}[Mumford \cite{M}]
Let $V$ be a projective variety, $\F$ a coherent sheaf on $V$ and 
let $L$ be a line bundle on $V$ generated by its global sections. 
Suppose that $H^i(V,\F\otimes L^{-i})=0$ for all $i>0$. Then the 
natural map 
$$H^0(V,\F)\otimes H^0(V,L)^{\otimes n} \to H^0(V,\F\otimes L^n)$$ 
is surjective for all $n\ge1$. 
\end{lmm}

\begin{lmm}\label{proj-normal}
Let $H_1,\dots,H_n$ be line bundles on an elliptic curve $E$ 
of $\deg H_i \ge 3$ for $i=1,\dots,n$. Then the natural map 
$$H^0(E,H_1)\otimes\cdots\otimes H^0(E,H_n) 
                   \to H^0(E,H_1\otimes\cdots\otimes H_n)$$ 
is surjective.
\end{lmm}

\begin{proof}
The case $n\ge 3$ is easily reduced to the case $n=2$ by 
induction on $n$, so let $n=2$. If $\deg H_1>\deg H_2$, then 
$H^1(E,H_1\otimes H_2^{-1})=0$, so that the surjectivity of 
the map 
$H^0(E,H_1)\otimes H^0(E,H_2) \to H^0(E,H_1\otimes H_2)$
immediately follows from Mumford's lemma. Suppose that 
$\deg H_1=\deg H_2$ and let $L=H_2-P$ for any fixed point $P\in E$. 
Then $L$ is globally generated since $\deg L\ge 2$, and 
$H^1(E,H_1\otimes L^{-1})=0$ since $\deg(H_1\otimes L^{-1})=1>0$. 
Hence the map 
$H^0(E,H_1)\otimes H^0(E,L) \to H^0(E,H_1\otimes L)$ 
is surjective by Mumford's lemma. We now consider the following 
commutative diagram with exact rows: 
$$\begin{array}{cccccc}
0     \to\!\! & \!\! H^0(H_1) \otimes H^0(L)  \!\! & 
  \!\!\to\!\! & \!\! H^0(H_1)\otimes H^0(H_2) \!\! & 
  \!\!\to\!\! & \!\! H^0(H_1)\otimes H^0(H_2\otimes\kappa(P))\to 0 \\
  & \downarrow & & \downarrow & & \downarrow \phantom{\to 0} \\
0 \to\!\! & H^0(H_1\otimes L) & \to & H^0(H_1\otimes H_2) & 
  \to & H^0(H_1\otimes H_2\otimes\kappa(P)) \, ,\phantom{\to 0}
\end{array}
$$
where we have just verified the surjectivity of the vertical map 
on the left, and the vanishing of the right upper corner comes 
from $H^1(E,L)=0$. So, to prove the required surjectivity of the 
vertical map in the middle, it suffices to show that the vertical 
map on the right is surjective, by the five-lemma. This map is 
factorized as 
$$H^0(H_1)\otimes H^0(H_2\otimes\kappa(P)) \stackrel{\alpha}{\to}
  H^0(H_1\otimes\kappa(P))\otimes H^0(H_2\otimes\kappa(P)) 
  \stackrel{\beta}{\to} H^0(H_1\otimes H_2\otimes\kappa(P)).$$ 
Here $\alpha$ is surjective because of the vanishing 
$H^1(E,H_1(-P))=0$, and $\beta$ is identified with the 
multiplication map $k^{\otimes 2} \stackrel{\sim}{\lra} k$, which 
is clearly surjective. Thus $\beta\circ\alpha$ is surjective, and 
the lemma is proved. 
\end{proof}

We continue the proof of Theorem \ref{ellip2} in the case $E^2\le-3$. 
Consider the decomposition (2) of $F^e_*\cO_E$ into $q=p^e$-torsion 
line bundles $\cO_E=L_0,L_1,\dots,L_{q-1}$ on $E$. We fix any $i$ 
with $0<i\le q-1$ and let $I\subset R$ be an ideal isomorphic to 
the reflexive $R$-module 
$$J_i=\Gamma_*(L_i)=\bigoplus_{n\ge 1} H^0(E,L_i\otimes L^n)t^n$$ 
of rank 1, which is a non-trivial $R$-summand of $R^{1/q}$. Then 
the minimal resolution $f\colon\widetilde{X}\to X=\Spec R$ is 
factorized as 
$$f\colon \widetilde{X} \to \FB_e(X) \to \Bl_I(X) \to X,$$ 
where the blowup $\Bl_I(X)=\Proj R[It]$ of $X$ with respect to 
the ideal $I$ has an exceptional curve that is the image of 
$E \subset \widetilde{X}$, since $I\cong J_i$ is not a flat 
$R$-module. It follows that $\widetilde X$ is the normalization 
of $\Bl_I(X)$. So, to prove the theorem, it is sufficient to 
show that the Rees algebra $R[It]$ is normal. 

To prove the normality of $R[It]=\bigoplus_{m\ge 0}I^mt^m$, note 
that its normalization is 
$$\widetilde{R[It]} = \bigoplus_{m\ge 0} \overline{I^m}t^m,$$ 
where $\overline{I^m} \subseteq R$ is the integral closure of 
the ideal $I^m$; see \cite{L}. Note also that 
$$I\cO_{\widetilde X} \cong f^{\star}J_i 
                    \cong \bigoplus_{n\ge 1} (L_i\otimes L^n)t^n
                    \cong \cO_{\widetilde X}(-E)\otimes\pi^*L_i 
$$
is an invertible sheaf on $\widetilde X$ by Lemma \ref{lem2}, 
so that 
$$\overline{I^m} 
  \cong H^0(\widetilde{X},\cO_{\widetilde X}(-mE)\otimes\pi^*L_i^m)
  \cong \bigoplus_{n\ge m} H^0(E,L_i^m\otimes L^n)t^n
$$
for all $m\ge 1$. Now, since $\deg L\ge 3$, we can apply Lemma 
\ref{proj-normal} to $H_1=\cdots=H_m:=L_i\otimes L$ and 
$H_{m+1}=\cdots=H_n:=L$ to obtain the surjectivity of the map 
$$H^0(E,L_i\otimes L)^{\otimes m}\otimes H^0(E,L)^{\otimes n-m} 
                                  \to H^0(E,L_i^m\otimes L^n)$$ 
for all $n\ge m\ge 1$. This implies that the multiplication map 
$\overline{I}^{\otimes m} \to \overline{I^m}$ is surjective in 
all degree $n$. Since $I=\overline{I}$ is integrally closed, we 
conclude that $I^m = \overline{I^m}$, from which the normality 
of the Rees algebra $R[It]$ follows. 
\end{proof}

\begin{exa}
Let 
\[
X = \Spec \FF_{2}[x,y,z]/(y^2+xyz+x^3z+xz^3).
\]
Again from \cite[Cor.\ 4.3]{Hi} and Fedder's criterion, 
$X$ has an F-pure simple elliptic singularity of type $\tilde E_{7}$ at the origin.
The exceptional set of $\FB_{1}(X)$ consists of three projective lines.
It shows that  
it is necessary  to suppose in Theorem \ref{ellip2}
 that $d=-E^{2}$ is not a power of $p$. 
 The normalization of $\FB_{1}(X)$ is smooth.
\end{exa}

\begin{exa}
The variety
\[
X= \Spec \FF_{2}[x,y,z]/(y^2z+xyz+x^3+z^3)
\]
 has an F-pure simple elliptic singularity of type $\tilde E_{6}$.
By Macaulay2 computations, we can see that $\FB_{1}(X)$ is smooth and 
the exceptional set is a smooth elliptic curve, as expected from Theorem \ref{ellip2}.
\end{exa}

\subsection{The non-F-pure case.}
Now we consider the structure of $f^{\star}[R^{1/q}]_{0\!\!\mod\Z}$ 
assuming that $R$ is not F-pure, or equivalently, $E$ is a supersingular 
elliptic curve. In this case $E$ has no non-trivial $q$-torsion point 
under the group law. Then, contrary to the F-pure case, $F^e_*\cO_E$ 
turns out to be indecomposable as we will see below. 

For any elliptic curve $E$ and an integer $r>0$, there exists an 
indecomposable vector bundle $\F_r$ on $E$ of rank $r$ and degree 
zero with $h^0(\F_r)=1$, determined inductively by $\F_1=\cO_E$ and 
the unique non-trivial extension 
\begin{eqnarray} & 
0 \to \F_{r-1} \to \F_r \to \cO_E \to 0. 
\end{eqnarray}
Note that $\F_r$ is self-dual and (3) is the dual sequence of 
that in \cite[Theorem 5]{At}. 

\begin{lmm}[cf.\ Atiyah \cite{At}, Tango \cite{T}]\label{ss-indec}
If $E$ is a supersingular elliptic curve, then $F^e_*\cO_E\cong\F_q$ 
for all $q=p^e$. 
\end{lmm}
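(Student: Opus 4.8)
The plan is to pin down $F^e_*\cO_E$ by first computing its numerical invariants, then proving it is indecomposable, and finally invoking Atiyah's classification to identify it with $\F_q$. The role of supersingularity will be concentrated in a single place: the absence of nontrivial $q$-torsion line bundles.

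First I would record the invariants of $V:=F^e_*\cO_E$. Since $F^e$ is finite flat of degree $q=p^e$, the sheaf $V$ is locally free of rank $q$. As $F^e$ is affine we have $R^{>0}F^e_*\cO_E=0$, so $\chi(E,V)=\chi(E,\cO_E)=0$ and $H^0(E,V)=H^0(E,\cO_E)=k$; on a curve of genus $1$ the Euler characteristic of a bundle equals its degree, whence $\deg V=0$ and $h^0(V)=1$. Thus $V$ is a rank-$q$, degree-$0$ bundle with a one-dimensional space of global sections.

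Next, by Atiyah's theorem I would decompose $V=\bigoplus_j \F_{r_j}\otimes L_j$ into indecomposables with $L_j\in\mathrm{Pic}^0(E)$. The key step is to show every twist $L_j$ is $q$-torsion. Because $\F_{r_j}\otimes L_j$ is a direct summand, $\Hom(\F_{r_j}\otimes L_j,V)\ne 0$; by the adjunction $F^{e*}\dashv F^e_*$ together with $F^{e*}\cO_E=\cO_E$ and $F^{e*}L_j=L_j^{q}$, this group is identified with $H^0\!\big((F^{e*}\F_{r_j})^\vee\otimes L_j^{-q}\big)$. Pulling back the defining filtration $\cO_E=\F_1\subset\F_2\subset\cdots\subset\F_{r_j}$ along $F^{e}$ exhibits $F^{e*}\F_{r_j}$, and hence its dual, as an iterated extension all of whose graded pieces are $\cO_E$; therefore $(F^{e*}\F_{r_j})^\vee\otimes L_j^{-q}$ is filtered by copies of $L_j^{-q}$. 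A degree-$0$ line bundle carrying a nonzero section is trivial, so an iterated extension of copies of a nontrivial degree-$0$ bundle has no global sections. Nonvanishing of $H^0$ thus forces $L_j^{q}\cong\cO_E$.

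Finally supersingularity enters: the group scheme $E[q]$ is infinitesimal, so $\mathrm{Pic}^0(E)$ has no nontrivial $q$-torsion point, i.e.\ $\cO_E$ is the only $q$-torsion line bundle. Hence each $L_j\cong\cO_E$ and $V=\bigoplus_j\F_{r_j}$; since $h^0(\F_{r_j})=1$ for every $j$, the equality $1=h^0(V)=\#\{j\}$ leaves a single summand, so $V\cong\F_q$ with $q$ equal to its rank, as claimed. I expect the adjunction-and-filtration computation of the third paragraph, which controls the line-bundle twists in the Atiyah decomposition, to be the main obstacle; once that reduces indecomposability to the nonexistence of $q$-torsion line bundles, the conclusion is immediate from the supersingular hypothesis.
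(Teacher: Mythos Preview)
Your argument has a genuine gap at the point where you invoke Atiyah's theorem to write $V=\bigoplus_j \F_{r_j}\otimes L_j$ with $L_j\in\mathrm{Pic}^0(E)$. Atiyah's classification says that the indecomposable bundles of \emph{degree zero} are exactly those of this form, but you have only established that $V$ itself has degree zero, not that each of its indecomposable summands does. This is not automatic: for instance $\cO_E(P)\oplus\cO_E(-P)$ has degree zero and $h^0=1$, yet its summands have degrees $\pm 1$. So before your adjunction-and-filtration step can be applied, you need a separate argument forcing every summand to have degree zero.

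The paper handles this as follows: pick any nontrivial $L\in\mathrm{Pic}^0(E)$; by the projection formula $H^0(V\otimes L)=H^0(L^q)=0$ (supersingularity already enters here, since $L$ is not $q$-torsion), so every indecomposable summand $\E_i$ satisfies $h^0(\E_i\otimes L)=0$, whence $\deg\E_i\le 0$ (an indecomposable bundle of positive degree on an elliptic curve is semistable and has a section); as the degrees sum to zero, each is zero. Once this is in place, your adjunction argument and the paper's are two sides of the same coin: the paper tests $V$ against the line bundle $L_j^{-1}$ via $H^0(V\otimes L_j^{-1})=H^0(L_j^{-q})$, whereas you test $V$ against the whole summand $\F_{r_j}\otimes L_j$ and unpack $F^{e*}\F_{r_j}$ through its filtration by $\cO_E$'s. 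The paper's route is shorter because it avoids analysing $F^{e*}\F_{r_j}$, but both reduce to the same vanishing $H^0(L_j^{-q})=0$ and the absence of nontrivial $q$-torsion.
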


\begin{proof}
Let $F^e_*\cO_E=\E_1\oplus\cdots\oplus\E_n$ be the decomposition 
of $F^e_*\cO_E$ into indecomposable bundles $\E_i$ of rank $r_i$ 
and degree $d_i$. Then $d_1+\cdots+d_n=\chi(F^e_*\cO_E)=0$ by 
Riemann--Roch. Pick a non-trivial line bundle $L$ of degree zero. 
Then 
$\sum_{i=1}^nh^0(\E_i\otimes L)=h^0(L\otimes F^e_*\cO_E)=h^0(L^q)=0$, 
since there is no non-trivial $q$-torsion line bundle on a 
supersingular elliptic curve. Hence $d_i=\deg(\E_i\otimes L)\le 0$ 
for all $i=1,\dots,n$. Thus the indecomposable summands $\E_i$ of 
$F^e_*\cO_E$ have degree $d_i=0$, and exactly one of them, say $\E_1$, 
 has a non-zero global section since $h^0(F^e_*\cO_E)=1$. Then by 
Atiyah \cite[Theorem 5]{At}, we have $\E_1 \cong \F_{r_1}$ and 
$\E_i \cong \F_{r_i} \otimes L_i$ for $i=2,\dots,n$, where 
$L_2,\dots,L_n$ are non-trivial line bundles of degree zero. 
Suppose that $n\ge 2$. Then $L_2^{-1}\otimes F^e_*\cO_E$ has a 
non-zero global section since its direct summand $\F_{r_2}$ does. 
On the other hand, however, 
$H^0(E,L_2^{-1}\otimes F^e_*\cO_E)=H^0(E,L_2^{-q})=0$ 
since $L_2$ is not a $q$-torsion line bundle by our assumption. 
We thus conclude that $n=1$, i.e., $F^e_*\cO_E \cong \F_q$. 
\end{proof}

Now for each $r$, we consider the graded $R$-module 
$$M_r = \bigoplus_{n\ge 0} H^0(E,\F_r\otimes L^n)t^n$$ 
and regard its torsion-free pullback $\widetilde{M_r}=f^{\star}M_r$ 
to the minimal resolution $\widetilde X$ of $X=\Spec R$ as a subsheaf 
of 
$$\M_r=\bigoplus_{n\ge 0} (\F_r\otimes L^n)t^n.$$ 

To obtain an information on the flattening of $R^{1/q}$, we consider 
the torsion-free pullback $f^{\star}M_r$ of $M_r$ to the minimal 
resolution, because $[R^{1/q}]_{0\!\!\mod\Z} \cong M_q$ by Lemma 
\ref{ss-indec}. 

\medskip\noindent
4.2.1.~\textit{Non-F-pure $\widetilde{E}_8$-singularities}. 
We first consider the case of  $\widetilde{E}_8$-singularities, that is, 
the case $\deg L=-E^2=1$. In this case, $L\cong\cO_E(P_0)$ for a point 
$P_0\in E$. 

We fix any point $P\in E$ and let $V \subset E$ be a sufficiently small 
open neighborhood $V$ of $P$ on which $L$ and $\F_r$ trivialize. 
We choose a local basis $e_1,\dots,e_r$ of $\F_r$ on $V$ inductively 
as follows. For $r=1$, let $e_1$ be a (local) basis of $\F_1=\cO_E$ 
corresponding to its global section $1\in H^0(E,\cO_E)$. For $r\ge 2$, 
we think of $\F_{r-1}$ as a subbundle of $\F_r$ via the exact sequence 
(3), and extend the local basis $e_1,\dots,e_{r-1}$ of $\F_{r-1}$ on $V$ 
to a local basis $e_1,\dots,e_r$ of $\F_r$.  

Let $U=\pi^{-1}V \subset \widetilde{X}$. Then, with the local 
trivialization $L|_V\cong\cO_V$ and 
$\F_r|_V\cong\bigoplus_{i=1}^r\cO_Ve_i \cong \cO_V^{\oplus r}$ 
as above, we have 
$$
\M_r|_U \cong \bigoplus_{i=1}^r\cO_Ue_i \cong \cO_U^{\oplus r},
$$
where $\cO_U = \bigoplus_{n\ge 0}(L|_V)^nt^n 
            \cong \bigoplus_{n\ge 0}\cO_Vt^n = \cO_V[t]$.
Note that the fiber coordinate $t$ and a regular parameter $u$ at 
$P \in E$ form a system of coordinates of $U$. With this notation 
we shall express generators of the $\cO_U$-module $\widetilde{M}_r|_U 
\subseteq \M_r|_U$, which come from homogeneous elements of the 
graded $R$-module $M_r$. 

First note that the degree zero piece $[M_r]_0=H^0(E,\F_r)=H^0(E,\F_1)$ 
of $M_r$ is a one-dimensional $k$-vector space, so that its contribution 
to the generation of $\widetilde{M}_r|_U$ is just $e_1$. It is also easy 
to see that the graded parts of $\widetilde{M}_r|_U$ and $\mathcal{M}_r|_U$ 
coincide in degree $\ge 2$ and are generated by $t^2e_1,\dots,t^2e_r$, 
since $\F_r\otimes L^n$ is generated by global sections for $n\ge 2$. 
It remains to consider the contribution of the degree one piece  
$[M_r]_1=H^0(E,\F_r\otimes L)t$ to the generation of $\widetilde{M}_r|_U$. 
To this end, note that we have an exact sequence 
$$
0 \to H^0(E,\F_i\otimes L) \to H^0(E,\F_{i+1}\otimes L) \to H^0(E,L) \to 0 
$$
for $1\le i\le r-1$, via which we regard $H^0(E,\F_i\otimes L)$ as 
a subspace of $H^0(E,\F_r\otimes L)$. Then, since $h^0(\F_i\otimes L)=i$ 
by Riemann-Roch, we  can choose a basis $s_1,\dots,s_r$ of 
$H^0(E,\F_r\otimes L)$ so that 
$s_1,\dots,s_i$ form a basis of $H^0(E,\F_i\otimes L)$ for $1\le i\le r$. 
It also follows from exact sequence $(3)\otimes L$ that the global 
sections $s_1,\dots,s_i$ generate $\F_i\otimes L$ on $E\setminus\{P_0\}$, 
so that they give a basis of $\F_i\otimes L\otimes K$ as a vector space 
over the function field $K$ of $E$. On the other hand, $e_1,\dots,e_i$ can 
also be viewed as a basis of $\F_i\otimes L\otimes K \cong K^{\oplus i}$ 
under the local trivialization 
$\F_i\otimes L|_V \cong \bigoplus_{j=1}^i\cO_Ve_i \cong \cO_V^{\oplus i}$ 
induced from $\F_i|_V\cong\cO_V^{\oplus i}$ and $L|_V\cong\cO_V$. We will 
compare the basis consisting of $s_i\otimes 1$ and the standard basis 
$e_1,\dots,e_r$ of $\F_r\otimes L\otimes K \cong K^{\oplus r}$ using 
the following commutative diagram with exact rows: 
$$\begin{array}{ccccccc}
0 \to & \!\! H^0(\F_{i-1}\otimes L)\otimes\cO_V \!\! &
  \!\! \to \!\! & \!\! H^0(\F_i\otimes L)\otimes\cO_V \!\! & 
  \!\! \to \!\! & \!\! H^0(L)\otimes\cO_V \!\! & \to 0 \\
        & \downarrow & & \downarrow & & \downarrow & \\
0 \to & \F_{i-1}\otimes L|_V & 
   \to & \F_i\otimes L|_V & \to & L|_V & \to 0 \\
        & \downarrow_{{}^{\cong}} & & \downarrow_{{}^{\cong}} & 
        & \downarrow_{{}^{\cong}} & \\
0 \to & \cO_V^{\oplus i-1} & \to & \cO_V^{\oplus i} & \to & \cO_V & \to 0
\end{array}
$$

Suppose now that $P=P_0$. Since Bs$|L|=\{P_0\}$, we may choose a 
regular parameter $u$ at $P_0\in E$ so that $s_1\otimes 1=u$. 
It then follows from the above diagram that  
$$s_i\otimes 1=ue_i+\sum_{j=1}^{i-1}a_{i,j}e_j,$$
where $a_{ij}$'s are local regular functions on $V$. 
We claim that we can replace $s_1,\dots,s_r$ so that they satisfy the 
condition: 
\begin{eqnarray}
u|a_{i,j} \text{ for } 1\le j\le i-2 \text{ but } 
a_{i,i-1} \text{ is not divisible by } u. & 
\end{eqnarray}

To prove the claim, there is nothing to do with $i=1$. So let $i=2$ 
and suppose $u|a_{2,1}$. We consider a $k$-linear map 
$H^0(E,L) \to H^0(E,\F_2\otimes L)$ given by $s_1\mapsto s_2$, 
which gives rise to a $K$-linear map
 $K \cong L\otimes K \to \F_2\otimes L\otimes K \cong K^2$ 
sending 
$1=u^{-1}(s_1\otimes 1) \mapsto u^{-1}(s_2\otimes 1)=e_2+(a_{21}/u)e_1$. 
Since $a_{21}/u\in\cO_V$, this gives a splitting of the surjective map 
$\cO_V^{\oplus 2} \cong \F_2\otimes L|_V \to L|_V \cong \cO_V$ at $P_0\in V$, 
as well as at any other point. Then we have a global splitting of the 
surjective map $\F_2\otimes L\to L$, contradicting to the non-triviality 
of the extension (3). Thus $a_{2,1}(P_0) \ne 0$. 
Next let $i\ge 3$. Then by induction, we may replace $s_i$ by 
$s_i-\sum_{j=1}^{i-2}(a_{i,j}(P_0)/a_{j+1,j}(P_0))s_{j+1}$ to assume that 
$u|a_{i,j}$ for $1\le j\le i-2$. It then follows that $a_{i,i-1}$ is not 
divisible by $u$ because otherwise, $s_1 \mapsto s_i$ would give a global 
splitting of $\F_i\otimes L \to L$ as above. 

Consequently, local generators of $\widetilde{M}_r$ on a neighborhood 
$U_0$ of $P_0$ are described as  
\begin{eqnarray*}
\widetilde{M}_r|_{U_0}
 & = & \cO_{U_0}\langle e_1,tue_i+a_{i,i-1}te_{i-1},t^2e_i\,|\,2\le i\le r\rangle \\
 & = & \cO_{U_0}\langle e_1,tue_i+a_{i,i-1}te_{i-1},t^2e_r\,|\,2\le i\le r\rangle,
\end{eqnarray*}
where $a_{i,i-1}(P_0) \ne 0$. Accordingly the ideal 
$\I_{\widetilde{M}_r} \subset \cO_{\widetilde X}$ defined in Section 2 has 
the following local expression:
$$\I_{\widetilde{M}_r}|_{U_0} \cong (t^r,t^{r-1}u^{r-1}) \cong (t,u^{r-1}).$$

If $P_0 \ne P \in U$ then 
$\widetilde{M_r}|_U = \cO_U\langle e_1,te_i\,|\,2\le i\le r\rangle 
                \cong \cO_U^{\oplus r}$ 
by a similar argument. 

Summarizing the argument so far, we have 

\begin{thm}\label{ellip3}
Let $(X,x)$ be a non-F-pure simple elliptic singularity with the 
elliptic exceptional curve $E$ on the minimal resolution $\widetilde X$ 
such that $E^2=-1$. Let $P_0$ be the point on $E\subset\widetilde{X}$ 
such that $\cO_{\widetilde X}(-E)\otimes\cO_E\cong\cO_E(P_0)$ and let 
$\I_e \subset \cO_{\widetilde X}$ 
be the ideal sheaf defining a fat point supported at $P_0\in\widetilde X$ 
whose local expression at $P_0$ is 
$$(\I_e)_{P_0}=(t,u^{p^e-1})$$
as above. Then for any $e\ge 1$, the blowup $\Bl_{\I_e}(\widetilde{X})$ 
of $\widetilde X$ at $\I_e$ coincides with the normalization of the 
$e$-th F-blowup $\FB_e(X)$. 
\end{thm}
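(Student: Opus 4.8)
The plan is to exhibit $\Bl_{\I_e}(\widetilde X)$ as a normal flattening of $R^{1/q}$ that both dominates $\FB_e(X)$ and is dominated by its normalization $\widetilde{\FB}_e(X)$, and then to force these two surfaces to coincide by a contraction count, following the pattern of Corollary \ref{ellip0} and Theorem \ref{ellip1}. The basic input is the splitting $R^{1/q}\cong M_q\oplus N_q$, where $N_q=\bigoplus_{i=1}^{q-1}[R^{1/q}]_{i/q\!\!\mod\Z}$ and $[R^{1/q}]_{0\!\!\mod\Z}\cong M_q$ because $F^e_*\cO_E\cong\F_q$ by Lemma \ref{ss-indec}. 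Since $d=\deg L=1$, the hypothesis $q\ne di$ of Lemma \ref{lem1} holds for every $1\le i\le q-1$, so $f^\star N_q$ is locally free on all of $\widetilde X$; and the local computation preceding the theorem gives that $\widetilde M_q=f^\star M_q$ is locally free off $P_0$ with $\I_{\widetilde M_q}=\I_e=(t,u^{q-1})$ near $P_0$. Consequently the non-flat locus of $f^\star R^{1/q}=f^\star M_q\oplus f^\star N_q$ is exactly $\{P_0\}$; and since $\det f^\star N_q$ is invertible, $\Bl_{\I_e}(\widetilde X)=\Bl_{f^\star R^{1/q}}(\widetilde X)$ (the invertible contribution $\I_{f^\star N_q}$ not affecting the blowup).

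First I would descend from $\widetilde X$. With $g\colon\Bl_{\I_e}(\widetilde X)\to\widetilde X$, the scheme $\Bl_{f^\star R^{1/q}}(\widetilde X)$ is the universal flattening of $f^\star R^{1/q}$ (Proposition \ref{OZ-Vi}), so that $(f\circ g)^\star R^{1/q}=g^\star f^\star R^{1/q}$ is locally free. Thus $\Bl_{\I_e}(\widetilde X)$ is a flattening of $R^{1/q}$ over $X$, and universality of $\FB_e(X)$ produces an $X$-morphism $\psi\colon\Bl_{\I_e}(\widetilde X)\to\FB_e(X)$. Inspecting the two affine charts of the blowup of $(t,u^{q-1})$ shows that $\Bl_{\I_e}(\widetilde X)$ carries at worst an $A_{q-2}$ rational double point over $P_0$ and that the fibre $G:=g^{-1}(P_0)$ is a single $\PP^1$; in particular $\Bl_{\I_e}(\widetilde X)$ is normal, so $\psi$ lifts to $\tilde\psi\colon\Bl_{\I_e}(\widetilde X)\to\widetilde{\FB}_e(X)$.

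The opposite direction requires replacing Corollary \ref{ellip0}, which is unavailable here since $d=1$ is a power of $p$ and $(X,x)$ is not F-pure. Instead I would show directly that $\widetilde X$ is the normalization of $\Bl_{N_q}(X)$; granting this, the direct summand $N_q\subset R^{1/q}$ gives $\FB_e(X)\to\Bl_{N_q}(X)$ and hence, after normalizing, a morphism $\varphi\colon\widetilde{\FB}_e(X)\to\widetilde X$. The crux is the intersection number $c_1(f^\star N_q)\cdot E$: in the exact sequence $0\to f^\star[R^{1/q}]_{i/q\!\!\mod\Z}\to\pi^*F^e_*L^i\to\Coker(\alpha)\to0$ of Lemma \ref{lem1} (valid for all $i$ as $d=1$), one has $c_1(\pi^*F^e_*L^i)\cdot E=\deg F^e_*L^i=\deg L^i=i$, while $\Coker(\alpha)$ is a rank $q-i$ bundle on $E$, so $c_1(\Coker(\alpha))\cdot E=(q-i)E^2=-(q-i)$; hence each summand contributes $c_1(f^\star[R^{1/q}]_{i/q\!\!\mod\Z})\cdot E=q$ and $c_1(f^\star N_q)\cdot E=q(q-1)>0$. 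The argument of Proposition \ref{FBsurface}, applied verbatim to the reflexive module $N_q$, then shows that $E$ is not contracted by $\widetilde X\to\Bl_{N_q}(X)$; as $E$ is the only exceptional curve of $f$, this map is finite and birational, so $\widetilde X$ is indeed the normalization of $\Bl_{N_q}(X)$ and $\varphi$ exists.

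Finally I would conclude by a contraction count. The two $X$-morphisms $\varphi\circ\tilde\psi$ and $g$ from $\Bl_{\I_e}(\widetilde X)$ to $\widetilde X$ agree generically, hence coincide; since $g$ is an isomorphism over $\widetilde X\setminus\{P_0\}$, so is $\varphi$ there, and the exceptional curves of $\varphi$ all lie over $P_0$, witnessing the non-flatness of $f^\star R^{1/q}$ at $P_0$. Choosing such an exceptional curve $D\subset\varphi^{-1}(P_0)$, its preimage under $\tilde\psi$ is one-dimensional and contained in the irreducible fibre $G=g^{-1}(P_0)$, forcing $\tilde\psi(G)=D$; thus $\tilde\psi$ contracts neither $G$ nor the strict transform $\widehat E$ of $E$ (which maps onto the curve $E$). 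As the exceptional locus of $\tilde\psi$ lies in $\widehat E\cup G$, $\tilde\psi$ contracts no curve, so it is quasi-finite and proper, hence finite, and being birational between normal surfaces it is an isomorphism, giving $\Bl_{\I_e}(\widetilde X)\cong\widetilde{\FB}_e(X)$. The main obstacle is precisely the step that substitutes for Corollary \ref{ellip0}, namely proving $c_1(f^\star N_q)\cdot E>0$ so that $N_q$ is non-flat and $E$ is not contracted; the remaining flattening, normality and contraction arguments are formal once the local structure of $\widetilde M_q$ computed above is in hand.
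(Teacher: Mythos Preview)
Your argument is correct and follows the same architecture as the paper's proof: show that $Y=\Bl_{\I_e}(\widetilde X)$ is a normal flattening of $R^{1/q}$, obtain the factorization $Y\xrightarrow{\psi}\widetilde{\FB}_e(X)\xrightarrow{\varphi}\widetilde X$, and conclude that $\psi$ is a finite birational map of normal surfaces, hence an isomorphism. The paper simply cites Corollary~\ref{ellip0} for the existence of $\varphi$ and then invokes ``the same argument as in the proof of Theorem~\ref{ellip1}.''

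The one place where you do more work than the paper is in justifying $\varphi$. You are right to flag that Corollary~\ref{ellip0} is stated under the hypothesis that $d$ is not a power of $p$, and that its proof invokes F-purity to bound the free rank of $R^{1/q}$; at face value neither applies when $d=1$ and $(X,x)$ is non-F-pure. Your workaround via the intersection number $c_1(f^\star N_q)\cdot E=q(q-1)>0$, computed from the exact sequence in Lemma~\ref{lem1} (which \emph{does} apply for every $1\le i\le q-1$ since $q\neq i$), together with Proposition~\ref{FBsurface}, is a clean substitute that shows $E$ is not contracted and hence $\widetilde X$ is the normalization of $\Bl_{N_q}(X)$. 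The paper presumably reads ``power of $p$'' as $p^k$ with $k\ge1$, so that $d=1$ is allowed in Corollary~\ref{ellip0}; and in the non-F-pure case the non-flatness of $N_q$ is even easier (any free summand of $R^{1/q}$ would contradict non-F-purity). Either way, your explicit Chern-class computation makes the argument self-contained and avoids any ambiguity; the remaining steps---normality of $\Bl_{\I_e}(\widetilde X)$ from the $A_{q-2}$ chart computation, irreducibility of the fibre $G$, and the contraction count---match the paper's reasoning exactly.
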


\begin{proof}
We know that $Y=\Bl_{\I_e}(\widetilde{X})$ is a flattening of $R^{1/p^e}$ 
from the above argument and Corollary \ref{ellip0}. It is also easy to see 
that the exceptional curve of the blowup $\pi \colon Y \to \widetilde{X}$ 
is a single $\PP^1$. 
Then the same argument as in the proof of Theorem \ref{ellip1} shows that 
$\pi$ factors through the normalized F-blowup $\widetilde{\FB}_e(X)$ as  
$$
\pi=\varphi\circ\psi \colon Y=\Bl_{\I_e}(\widetilde{X}) \stackrel{\psi}{\lra}
                     \widetilde{\FB}_e(X) \stackrel{\varphi}{\lra} \widetilde{X}
$$
and that $\psi$ gives an isomorphism $Y \cong \widetilde{\FB}_e(X)$. 
\end{proof}

%

\begin{rem}
Theorem \ref{ellip3} says that the $e$-th normalized F-blowup 
$\widetilde{FB}_e(X)$ has the exceptional set consisting of an 
elliptic curve $E_1 \cong E$ and a smooth rational curve $E_2\cong\PP^1$, 
and has an $A_{p^e-2}$-singularity on $E_2\setminus E_1$. The theorem 
also says that $\FB_e(X)$ does not dominate $\FB_{e'}(X)$ whenever 
$e$ and $e'$ are distinct positive integers. In other words, the 
monotonicity of F-blowup sequences breaks down for non-F-pure 
$\widetilde{E}_8$-singularities; compare to the F-pure case \cite{Y3}. 
On the other hand, it again has nothing to say about the normality of 
$\FB_e(X)$. 

Let's examine our observation with Macaulay2 computation. 
\end{rem}

\begin{exa}\label{exa-1}
The variety
\[
X = \Spec \FF_{3}[x,y,z]/(x(x-z^{2})(x-2z^{2}) -y^{2}),
\]
 has a non-F-pure simple elliptic singularity of type $\tilde E_{8}$.
The exceptional set of $\FB_{1}(X)$ is the union of a smooth elliptic curve  $E_{1}$ and a projective line $E_{2}$.
By Macaulay2 computation, we could  check not that $\FB_{1}(X)$ is normal, but that $\FB_1(X)$ 
is normal at the generic points of $E_1$ and $E_2$,
and  there is a point  on $E_{2}\setminus E_{1}$  where $\FB_{1}(X)$ is normal but singular. 
The blowup of $\FB_{1}(X)$ at this point has the projective line as its exceptional locus.
It agrees with the fact that $\FB_{1}(X)$ has an $A_{1}$-singularity on $E_{2} \setminus E_{1}$ as stated in the above remark.
\end{exa}

\begin{lmm}
For $X$ as in Example \ref{exa-1}, if the Macaulay2 computation explained above
is correct, then $\FB_1(X)$ is normal.
\end{lmm}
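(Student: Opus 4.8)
The plan is to read off normality from Serre's criterion, using the Macaulay2 output only in codimension one and supplying the codimension-two information by theory. Since $\FB_1(X)$ is a two-dimensional integral scheme of finite type over a field, it is normal precisely when it satisfies $R_1$ and $S_2$. Apart from the generic points of the two exceptional curves $E_1$ and $E_2$, every codimension-one point of $\FB_1(X)$ lies over the regular locus of $X$, where $\FB_1(X)\to X$ is an isomorphism onto a normal scheme. The computation tells us that $\FB_1(X)$ is normal, hence regular, at the generic points of $E_1$ and of $E_2$, so $R_1$ holds; consequently the non-normal locus $N$, being closed and disjoint from every codimension-one point, is a finite set of closed points.

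It remains to establish $S_2$ at the points of $N$. By Theorem \ref{ellip3} the normalization $\nu\colon Y:=\widetilde{\FB}_1(X)=\Bl_{\I_1}(\widetilde X)\to W:=\FB_1(X)$ is finite and birational, and it is an isomorphism exactly over the normal locus of $W$; writing $\mathcal{C}=\nu_*\cO_Y/\cO_W$ we have $N=\mathrm{Supp}\,\mathcal{C}$, and, given $R_1$, the scheme $W$ is normal if and only if $\mathcal{C}=0$. To get a usable reformulation, let $g\colon Y\to X$ and $h\colon W\to X$, so that $g=h\circ\nu$. Since $X$ is normal and $g,h$ are proper and birational, $h_*\cO_W=\cO_X=g_*\cO_Y$, while $R^i\nu_*=0$ for $i>0$ as $\nu$ is finite. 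Pushing $0\to\cO_W\to\nu_*\cO_Y\to\mathcal{C}\to 0$ forward by $h$ and using that $h_*\cO_W\to h_*\nu_*\cO_Y$ is the identity of $\cO_X$ gives
$$0\to h_*\mathcal{C}\to R^1h_*\cO_W\to R^1g_*\cO_Y\to 0.$$
Now $b\colon Y\to\widetilde X$ is the blowup of the fat point $\I_1=(t,u^{p-1})$ at $P_0$, whose exceptional fibre is a rational curve $E_2$ carrying the rational singularity $A_{p-1}$, so $R^1b_*\cO_Y=0$ and $b_*\cO_Y=\cO_{\widetilde X}$; from $g=f\circ b$ and the Leray spectral sequence we get $R^1g_*\cO_Y\cong R^1f_*\cO_{\widetilde X}$, of length equal to the geometric genus $p_g=\dim_kR^1f_*\cO_{\widetilde X}=1$ of the simple elliptic singularity. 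Hence $\mathcal{C}=0$ if and only if $\ell(R^1h_*\cO_W)=1$; that is, normality of $W$ is equivalent to the statement that its non-normality makes no contribution to $H^1$.

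The hard part is exactly this last equivalence, namely proving that $W$ is $S_2$ at the finitely many closed points of $N$ that the computation does not reach, the most delicate being the intersection point $E_1\cap E_2$. I would settle it by a direct local study, writing down the local ring of $W$ at each such point from the explicit generators $e_1,\ tue_i+a_{i,i-1}te_{i-1},\ t^2e_r$ of $\widetilde{M}_r$ and the local form $\I_{\widetilde{M}_r}=(t,u^{r-1})$ of the attached ideal exhibited in the proof of Theorem \ref{ellip3} (here $r=p=3$), and then checking that $\cO_W$ has depth two there, equivalently that $\nu$ is a local isomorphism. Combined with the computational verification of normality at the generic points of $E_1$ and $E_2$ and at the $A_1$-point on $E_2\setminus E_1$, this forces $\mathcal{C}=0$, whence $\FB_1(X)=\widetilde{\FB}_1(X)$ is normal and the lemma follows.
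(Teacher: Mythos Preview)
Your proposal has a genuine gap. The cohomological reformulation via the conductor $\mathcal{C}$ is correct but leads nowhere: you reduce normality to $\ell(R^1h_*\cO_W)=1$, but you have no independent way to compute $R^1h_*\cO_W$ without already knowing the local structure of $W$. You then abandon this and propose a ``direct local study'' of $\cO_W$ at the closed points of $N$, but you do not carry it out; the sentence ``I would settle it by \dots\ checking that $\cO_W$ has depth two there'' is a plan, not a proof, and it is precisely the content of the lemma. Note also that $N$ may a priori contain many closed points of $E_1$ besides $E_1\cap E_2$, and the generators you cite describe $f^{\star}M_r$ as a module on $\widetilde X$, not the local ring of $\FB_1(X)$; extracting the latter from the Rees algebra of the full ideal $\I_{R^{1/p}}$ is not immediate. (Minor slip: the singularity on $E_2\setminus E_1$ is $A_{p-2}$, not $A_{p-1}$; this does not affect rationality.)

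The paper's proof is entirely different and actually closes the gap. It exploits the $k^*$-action on $X$ coming from quasi-homogeneity, which lifts to $\FB_1(X)$ by universality. Since the normal locus is open and $k^*$-stable, non-normality at a non-fixed point of $E_2$ would propagate along its orbit to a fixed point; the two fixed points on $E_2$ are the $A_1$-point (normal by computation) and $E_1\cap E_2$. So it suffices to prove normality along $E_1$. For this the paper uses the computational input that $\FB_1(X)$ is normal at the generic points of $E_1$ and $E_2$ to conclude that the maps $\tilde E_i\to E_i$ from the normalization are isomorphisms. Then at any point $z\in E_1$ one writes the complete local ring of the normalization as $k[[s,t]]$ with $k^*$ acting nontrivially on $s$ and trivially on $t$; surjectivity of $A\hookrightarrow k[[s,t]]\to k[[s]]$ and $A\to k[[t]]$ produces an element $h\in A$ with both linear coefficients nonzero, and then $h$ and $\lambda\cdot h$ (for $\lambda\ne 1$) have independent linear parts, forcing $A=k[[s,t]]$. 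This use of the torus action is the missing idea in your approach.
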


\begin{proof}
We may replace the base field $\FF_3$ with an algebraically closed filed $k$.
Being quasi-homogeneous, $X$ has a $k^*$-action. 
From the construction or the universality, the action lifts to F-blowups of $X$.
Every point of the divisor $E_1 \subset \FB_1(X)$, which is 
a smooth elliptic curve, is  fixed by the $k^*$-action.
On the other hand, the divisor $E_2 \cong \PP^1$  has exactly two fixed points.
One is the singular but normal point mentioned above and the other is the intersection 
$E_1 \cap E_2$. Since the normal locus is open and there is the $k^*$-action, 
$\FB_1(X)$ is normal along $E_2$ possibly except at $E_1 \cap E_2$.
Therefore it is now enough to show that $\FB_1(X)$ is normal along $E_1$.
Let $\tilde E_1$ and $\tilde E_2$ be the preimages of $E_1$ and $E_2$
on the normalization $\widetilde{\FB}_1(X)$ of $\FB_1(X)$.
Then for each $i=1,2$, since $E_i$ is normal and $\FB_1(X)$ 
is normal at the generic point of $E_i$, the map $\tilde E_i \to E_i$ is an isomorphism.

Let $A$ be the complete local ring of $\FB_1(X)$ at a point $z$ on $E_1$.
Its normalization is $k[[s,t]]$. We choose local coordinates $s,t$ so that
the $k^*$-action on $k[[s,t]]$ is linear and 
locally $s=0$ defines $\tilde E_1$ and 
$t=0$ defines the only one-dimensional orbit closure passing through the point over $z$.
Then the $k^*$-action on $t$ is trivial and the one on $s$ is non-trivial.
Since $\tilde E_i \to E_i$, $i=1,2$, are isomorphisms, the composite maps
$A \hookrightarrow k[[s,t]] \to k[[s]]$ and $A \hookrightarrow k[[s,t]] \to k[[t]]$
are surjective. Therefore $A$ contains  formal power series of the forms 
\[
f= f_1s+f_2t + \text{(higher terms)}, \, (f_i \in k, \, f_1 \ne 0)
\]
and 
\[
g= g_1s+g_2t + \text{(higher terms)},\, (g_i \in k, \, g_2 \ne 0).
\]
Then by a suitable linear combination of them, we obtain a formal power series
\[
h= h_1s+h_2t + \text{(higher terms)},\, (h_i \in k, \, h_1\ne 0, \, h_{2} \ne 0)
\]
contained in $A$. Then for  $1 \ne \lambda \in k^*$, $\lambda h \in A$ has a linear part linearly independent with that of
 $h$. It follows that $A=k[[s,t]]$ and hence $\FB_{1}(X)$ is normal.
\end{proof}

\begin{exa}
The variety 
\[
X = \Spec \FF_{2}[x,y,z]/(y^2+yz^3+x^3)
\]
has a non-F-pure simple elliptic singularity of type $\tilde E_{8}$.
The Frobenius pushforward $F_{*}\cO_{X}$ of the coordinate ring decomposes into the direct sum of two modules, say $N_{1}$ and $N_{2}$.
Then $F_{*}N_{i}$ $(i=1,2)$ further decomposes as $F_{*}N_{i} = N_{i1} \oplus N_{i2}$. 
By Macaulay2 computation, we saw that the torsion-free pullbacks $\widetilde{N_{1}}$ and $\widetilde{N_{11}}$ of $N_{1}$ and $N_{11}$
are non-flat at a point and those of the others are flat. Moreover the ideals 
associated to $\widetilde{N_{1}}$ and $\widetilde{N_{11}}$ as in Proposition \ref{prp-Vi} are respectively
 of the forms $(u,v)$ and $(u,v^{3})$ around the point with respect to some local coordinates $u,v$.
The last result coincides with Theorem \ref{ellip3}.
\end{exa}


\medskip\noindent
4.2.2.~\textit{
Non-F-pure simple elliptic singularities with $E^2\le -2$}. 
In this case, we have $\deg L=-E^2 \ge 2$. Then the argument 
in (4.2.1) shows that $\widetilde{M_r}$ is flat. Thus we have 

\begin{prp}
Let $(X,x)$ be a non-F-pure simple elliptic singularity with elliptic 
exceptional curve $E$ on the minimal resolution $\widetilde X$. 
Suppose $E^2\le-2$ and $d=-E^2$ is not a power of the characteristic 
$p$. Then $\widetilde X$ is the normalization of the $e$-th F-blowup 
$\FB_e(X)$ for all $e\ge 1$. 
\end{prp}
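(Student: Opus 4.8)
The plan is to show that the minimal resolution $\widetilde X$ itself flattens $R^{1/q}=F^e_*\cO_X$, and then to run the same finiteness argument as in Corollary \ref{ellip0}: the universal property produces a birational morphism $g\colon\widetilde X\to\FB_e(X)$, and one checks that $g$ contracts no curve, so that the smooth (hence normal) surface $\widetilde X$ is exactly the normalization of $\FB_e(X)$. So everything reduces to two things: flatness of $f^{\star}R^{1/q}$, and finiteness of $g$.

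To establish flatness I would work summand by summand along the decomposition $R^{1/q}=\bigoplus_{i=0}^{q-1}[R^{1/q}]_{i/q\mod\Z}$. For the summands with $1\le i\le q-1$, the hypothesis that $d=-E^2\ge2$ is not a power of $p$ guarantees $q=p^e\ne di$ for every such $i$; indeed $q=di$ would force $d\mid p^e$ and hence $d$ to be a power of $p$. Thus Lemma \ref{lem1} applies to each of these $i$ and shows that $f^{\star}[R^{1/q}]_{i/q\mod\Z}$ is locally free. For the remaining summand $i=0$, Lemma \ref{ss-indec} identifies $[R^{1/q}]_{0\mod\Z}\cong M_q$, whose torsion-free pullback $\widetilde{M_q}$ is flat because $\deg L\ge2$, as established by the local generator computation of (4.2.1). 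Summing, $f^{\star}R^{1/q}=\bigoplus_{i=0}^{q-1}f^{\star}[R^{1/q}]_{i/q\mod\Z}$ is locally free, i.e.\ $\widetilde X$ is a flattening of $R^{1/q}$.

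Granting flatness, the universal property of $\FB_e(X)=\Bl_{R^{1/q}}(X)$ factors the minimal resolution as $f\colon\widetilde X\stackrel{g}{\to}\FB_e(X)\stackrel{h}{\to}X$, with $g$ birational. By Kunz \cite{Ku} the morphism $h$ is an isomorphism exactly over the regular locus of $X$, so since $x$ is singular and $e\ge1$, the fiber of $h$ over $x$ contains a curve; this curve is $g(E)$, the image of the unique $f$-exceptional curve $E$. Hence $g$ does not contract $E$, and as $E$ is the only curve contracted by $f$, the map $g$ contracts no curve and is therefore finite. A finite birational morphism from the normal surface $\widetilde X$ exhibits $\widetilde X$ as the normalization of $\FB_e(X)$, which is the claim. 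The only genuinely substantive ingredient is the flatness of the degree-zero summand $\widetilde{M_q}$, and I expect the main obstacle to lie there: one must rerun the local computation of (4.2.1) with $\deg L\ge2$ and verify that, in contrast to the $\widetilde{E}_8$ case, no fat point such as $(t,u^{p^e-1})$ is created, so that $\widetilde{M_q}$ is already flat on $\widetilde X$ rather than only on a further blowup. Once this is in place, the concluding finiteness argument is verbatim that of Corollary \ref{ellip0}.
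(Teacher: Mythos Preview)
Your proposal is correct and follows essentially the same route as the paper: establish that $\widetilde X$ flattens $R^{1/q}$ by combining Lemma~\ref{lem1} for the summands $1\le i\le q-1$ with Lemma~\ref{ss-indec} and the computation of (4.2.1) for $i=0$, then run the finiteness argument of Corollary~\ref{ellip0}. The only minor difference is that you invoke Kunz's theorem directly to see that $\FB_e(X)\to X$ is not an isomorphism over $x$, whereas the paper adapts the free-summand counting from Corollary~\ref{ellip0} (noting that in the non-F-pure case $\cO_{X,x}^{1/q}$ has \emph{no} free summand, rather than a rank-one free summand); both yield the same conclusion.
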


\begin{proof}
Since $\widetilde X$ is a flattening of $R^{1/q}=M_q\oplus N_q$ by 
Lemmas \ref{lem1} and \ref{ss-indec} and (4.2.1), the proof goes 
similarly as that for Theorem \ref{ellip2}. Note that $\cO_{X,x}^{1/q}$ 
has no free summand in this case, since $\cO_{X,x}$ is not F-pure. 
\end{proof}

\noindent{\bf Question.} 
If $E^2\le-3$, then does $\widetilde{X} \cong \FB_e(X)$ hold for 
all $e\ge 1$? More generally, if $(X,x)$ is a cone singularity over 
a smooth projective curve and if $-E^2$ is sufficiently large, is 
$\FB_e(X)$ the minimal resolution? 

\begin{exa}
The variety
\[
X = \Spec \FF_{2}[x,y,z]/(y^2z+yz^2+x^3),
\]
has a non-F-pure simple elliptic singularity of type $\tilde E_{6}$.
We could check that  $\FB_{1}(X)$ is the minimal resolution.
\end{exa}

\end{document}